\documentclass[a4paper,11pt]{amsart}
\usepackage{amssymb,amscd,amsxtra,xypic}
\usepackage[all]{xy}
\usepackage{array}

\setlength{\topmargin}{5mm}
\setlength{\oddsidemargin}{9.7mm}
\setlength{\evensidemargin}{9.7mm}
\setlength{\marginparwidth}{0cm}
\setlength{\marginparsep}{0cm}
\setlength{\textheight}{220mm}
\setlength{\textwidth}{140mm}
\setlength{\footskip}{20mm}
\setlength{\headheight}{13pt}
\setlength{\headsep}{25pt}



\usepackage[pdftex]{hyperref}
\hypersetup{%
bookmarksnumbered=true,%
colorlinks=true,%
setpagesize=false,%
pdftitle={},%
pdfauthor={Takuzo Okada}}

\theoremstyle{plain}
\newtheorem{Thm}{Theorem}[section]
\newtheorem{Lem}[Thm]{Lemma}

\newtheorem{Conj}[Thm]{Conjecture}


\theoremstyle{definition}
\newtheorem{Def}[Thm]{Definition}
\newtheorem{Def-Lem}[Thm]{Definition-Lemma}

\newtheorem{Rem}[Thm]{Remark}

\newtheorem*{Ack}{Acknowledgments}


\newcommand{\Aut}{\operatorname{Aut}}
\newcommand{\Bir}{\operatorname{Bir}}

\newcommand{\prt}{\partial}
\newcommand{\Sing}{\operatorname{Sing}}
\newcommand{\Spec}{\operatorname{Spec}}
\newcommand{\Cr}{\operatorname{Cr}}

\newcommand{\rank}{\operatorname{rank}}

\newcommand{\Cl}{\operatorname{Cl}}

\newcommand{\Pic}{\operatorname{Pic}}



\newcommand{\PSL}{\mathrm{PSL}}

\newcommand{\Cox}{\mathrm{Cox}}



\newcommand{\mbA}{\mathbb{A}}

\newcommand{\mbC}{\mathbb{C}}
\newcommand{\mbF}{\mathbb{F}}

\newcommand{\mbP}{\mathbb{P}}
\newcommand{\mbQ}{\mathbb{Q}}

\newcommand{\mbZ}{\mathbb{Z}}

\newcommand{\mcL}{\mathcal{L}}
\newcommand{\mcM}{\mathcal{M}}

\newcommand{\mcO}{\mathcal{O}}

\newcommand{\mcQ}{\mathcal{Q}}

\newcommand{\mcX}{\mathcal{X}}

\newcommand{\K}{\Bbbk}

\newcommand{\inj}{\hookrightarrow}

\newcommand{\ratmap}{\dashrightarrow}

\newcommand{\bmu}{\boldsymbol{\mu}}

\makeatletter
\def\imod#1{\allowbreak\mkern10mu({\operator@font mod}\,\,#1)}
\makeatother

\title[Nonrational del Pezzo fibrations]{Nonrational del Pezzo fibrations admitting an action of the Klein simple group}
\author[Takuzo Okada]{Takuzo Okada}
\address{Department of Mathematics, Faculty of Science and Engineering\endgraf
Saga University, Saga 840-8502 Japan}
\email{okada@cc.saga-u.ac.jp}
\subjclass[2000]{14E07 \and 14E08 \and 14J30}
\date{}

\begin{document}

\begin{abstract}
We present a series of del Pezzo fibrations of degree $2$ admitting an action of the Klein simple group and prove their nonrationality by the reduction modulo $p$ method of Koll\'{a}r.
This is relevant to embeddings of the Klein simple group into the Cremona group of rank $3$.
\end{abstract}

\maketitle


\section{Introduction} \label{sec:intro}

The Klein simple group $G_K$ is a finite simple group $G_K \cong \PSL_2 (\mbF_7)$ of order $168$ and it is well known that $G_K$ is the automorphism group of the Klein quartic curve which is defined in $\mbP^2$ by the equation $x_0^3 x_1 + x_1^3 x_2 + x_2^3 x_0 = 0$.
Let $S_K$ be the double cover of $\mbP^2$ ramified along the Klein quartic curve.
Then $S_K$ is a nonsingular del Pezzo surface of degree $2$ admitting a faithful action of $G_K$.
Belousov \cite{Bel} proved that $\mbP^2$ and $S_K$ are the only del Pezzo surfaces admitting a faithful action of $G_K$.
In \cite{Ahm1}, Ahmadinezhad presented a series of $G_K$-Mori fiber spaces $X_n/\mbP^1$ over $\mbP^1$ whose general fibers are isomorphic to $S_K$ for $n \ge 0$.
A $G$-Mori fiber space, where $G$ is a group, is a $G$-equivariant version of Mori fiber space (see Definition \ref{def:GMFS}).
Among the above series of varieties, $X_n/\mbP^1$ is a del Pezzo fibration for $n \ge 1$ while $X_0/\mbP^1 = \mbP^1 \times S_K/\mbP^1$ is not (see Section \ref{sec:constdP} for details).
We have the following conjectures concerning these varieties.

\begin{Conj}[{\cite{Ahm1}}] \label{conj:KleindP}
The fibrations $\mbP^1 \times \mbP^2/\mbP^1$ and $X_n/\mbP^1$, for $n \ge 0$, are the only $G_K$-Mori fiber spaces over $\mbP^1$ in dimension $3$.
\end{Conj}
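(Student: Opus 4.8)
The plan is to classify the $G_K$-Mori fiber spaces $\pi\colon X\to\mbP^1$ of dimension three by determining first their generic fibers and then the fibrations themselves. Since every finite subgroup of $\Aut(\mbP^1)=\mathrm{PGL}_2(\mbC)$ is cyclic, dihedral, or isomorphic to one of $A_4$, $S_4$, $A_5$, the simple group $G_K$ has no faithful action on $\mbP^1$ and hence acts trivially on the base of $\pi$. As $G_K$ is simple and $X$ is irreducible, the induced action on the generic fiber, and therefore on a general closed fiber $F$, cannot be trivial, so it is faithful; thus $F$ is a smooth del Pezzo surface carrying a faithful $G_K$-action, and by Belousov's theorem \cite{Bel} either $F\cong\mbP^2$, in which case $\pi$ has relative degree $9$, or $F\cong S_K$, in which case $\pi$ has relative degree $2$. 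In either case $\rank\Pic^{G_K}(F)=1$ already holds --- for $S_K$ this is the statement that the image of $G_K$ in $W(E_7)$ has a one-dimensional invariant sublattice of $\Pic(S_K)$, spanned by $K_{S_K}$ --- so the requirement $\rank\Pic^{G_K}(X/\mbP^1)=1$ is automatic on the level of fibers, and the real content of the classification is to pin down the global model.

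In the degree $9$ case, a del Pezzo fibration of that degree over $\mbP^1$ with $\mbQ$-factorial terminal total space is a $\mbP^2$-bundle $X\cong\mbP(\mcE)$ with $\mcE$ of rank three on $\mbP^1$ (the standard structure of degree-$9$ del Pezzo fibrations, the fibers embedding relatively as Veronese surfaces, which cannot degenerate, and the $G_K$-action leaving no room for reducible fibers). After a twist by a line bundle, $\mcE$ is $G_K$-linearizable (the relevant obstruction in $H^2(G_K,\mbC^{\times})$ vanishes, as $G_K$ has no faithful three-dimensional spin representation), and $G_K$ acts trivially on the base; decomposing $\mcE$ into isotypic pieces and using that $G_K$ has no faithful representation of dimension at most two forces $\mcE\cong V\otimes\mcO_{\mbP^1}(k)$ for one of the two three-dimensional irreducible representations $V$ of $G_K$. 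Hence $X\cong\mbP^1\times\mbP^2$.

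In the degree $2$ case, $\mcE:=\pi_*\mcO_X(-K_X)$ is again a rank-three bundle on $\mbP^1$ which is $G_K$-linearizable after a twist, so as above $\mcE\cong V\otimes\mcO_{\mbP^1}(k)$ and $\mbP(\mcE)\cong\mbP^1\times\mbP^2$. Away from finitely many fibers the relative anticanonical map realizes $X$ as a double cover of $\mbP(\mcE)$ branched along a divisor of relative degree $4$; equivalently, $X$ is a hypersurface of relative degree $4$ in a $\mbP(1,1,1,2)$-bundle over $\mbP^1$ whose weight-one part is $\mbP(\mcE)$. Since the space of $G_K$-invariant quartic forms in the coordinates of $V$ is spanned by the Klein quartic $\Phi=x_0^3x_1+x_1^3x_2+x_2^3x_0$, while $G_K$ has no nonzero invariant forms of degree $2$ or $3$, the defining equation is forced to take the shape $\beta(t)\,w^2=\Phi(x_0,x_1,x_2)$ for a section $\beta$ of a line bundle on $\mbP^1$, so the only remaining discrete data are the degree of $\beta$ and the twist of the weight-two coordinate $w$. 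Imposing that $X$ be $\mbQ$-factorial and terminal forces the zeros of $\beta$ to be simple --- a zero of higher order would make $X$ singular along an entire fiber, contradicting terminality --- and one checks that the resulting varieties are exactly the fibrations $X_n$, $n\ge0$, of \cite{Ahm1} (with $X_0=\mbP^1\times S_K$ corresponding to $\beta$ a nonzero constant). Combined with the degree $9$ case, this yields the statement.

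The crux is the degree $2$ case, where two points require genuine work. First, one must produce a weighted-bundle model that remains valid over the degenerate fibers of $\pi$, so that the $G_K$-action is manifestly linear on every fiber and the forced shape of the equation really holds over all of $\mbP^1$; keeping the total space terminal while controlling those degenerate fibers is the delicate geometric input. Second, one must run a $G_K$-equivariant relative minimal model program over $\mbP^1$ --- a two-ray game at each step --- to confirm that the hypothesis $\rank\Pic^{G_K}(X/\mbP^1)=1$ excludes every model properly dominating, or dominated by, one of the $X_n$, so that no $G_K$-Mori fiber space is missing from the list and none is repeated.
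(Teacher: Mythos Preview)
The statement you are attempting to prove is labeled \emph{Conjecture} in the paper and is attributed to \cite{Ahm1}; the paper offers no proof and does not claim one. Its main theorem concerns Conjecture~\ref{conj:nonrat} (nonrationality of $X_n$ for $n\ge 5$), and Conjecture~\ref{conj:KleindP} appears only as motivation. So there is no ``paper's own proof'' to compare against: the result is open.

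As for your outline itself, the opening reductions are sound --- $G_K$ cannot act faithfully on $\mbP^1$, so it acts fiberwise, and Belousov's theorem forces the general fiber to be $\mbP^2$ or $S_K$. But you are candid that the remainder is a strategy rather than a proof, and the gaps you flag are real and substantial. In the degree~$9$ case, a $G_K$-Mori fiber space need only satisfy $\rank\Pic^{G_K}(X/\mbP^1)=1$, not $\rank\Pic(X/\mbP^1)=1$, so you cannot immediately conclude that $X$ is a $\mbP^2$-bundle; ruling out $G_K$-equivariant degenerations of the fibers requires an argument. In the degree~$2$ case, the invariant theory constraining the generic equation is correct, but extending the weighted-bundle model across the special fibers while keeping the total space terminal, and then showing via an equivariant two-ray game that no further $G_K$-Mori fiber spaces appear, is precisely the content of the conjecture and is not settled by what you have written. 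Your final paragraph acknowledges this; what precedes it should therefore be read as a plausible plan of attack, not a proof.
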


\begin{Conj}[{\cite{Ahm1}}] \label{conj:nonrat}
The varieties $X_n$ are non-rational for $n \ge 2$.
\end{Conj}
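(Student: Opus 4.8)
\medskip

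\noindent\textbf{Proof plan.}
The $G_K$-action plays no role in what follows: the assertion is the plain non-rationality of $X_n$, the group being relevant only to the Cremona-group application. Since $X_n$ is a del Pezzo fibration over $\mbP^1$ it is rationally connected, so for $X_n$ rationality is equivalent to ruledness, and it suffices to prove that $X_n$ is \emph{not ruled}, i.e.\ not birational to $\mbP^1\times S$ for any surface $S$. I would establish this by Koll\'ar's reduction-modulo-$p$ method. As the $X_n$ are cut out by explicit equations with rational coefficients, choose a flat projective model $\mcX_n\to\Spec R$ over a suitable finitely generated $\mbZ$-subalgebra $R\subset\mbC$, compatibly with the fibration over $\mbP^1$, together with a maximal ideal $\mfp\subset R$ of residue characteristic $2$; put $k:=\overline{\mbF}_2$ and $\overline{X}_n:=\mcX_n\times_R k$. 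The choice $p=2$ is forced by the geometry: a del Pezzo surface of degree $2$ is a double cover of $\mbP^2$ branched over a quartic, so $X_n$ is a double cover $\pi\colon X_n\to W_n$ of a $\mbP^2$-bundle $W_n\to\mbP^1$ branched over a divisor in $|2\mcM_n|$, and in characteristic $2$ this double cover degenerates to a \emph{purely inseparable} one --- the phenomenon Koll\'ar's method exploits.

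The core of the argument is the analysis of $\overline{X}_n$ together with a resolution $\mu\colon\widetilde{X}_n\to\overline{X}_n$. Locally the cover is $w^2=f$ for $f$ a local equation of the branch divisor, and in characteristic $2$ one has $2w\,dw-df=0$, so $\pi^{*}df=0$ in $\Omega^1_{\overline{X}_n}$ and $\overline{X}_n$ becomes singular along the locus where $df$ vanishes; restricted to a fibre of $\overline{X}_n\to\mbP^1$, the cover presents each fibre as an inseparable double cover of $\mbP^2$, a singular surface which, although still uniruled, is no longer separably uniruled because of the non-canonical singularities it has acquired. On the $\mbP^2$-bundle one has the adjunction formula $\omega_{\overline{X}_n}=\pi^{*}(\omega_{\overline{W}_n}\otimes\overline{\mcM}_n)$, and passing to the resolution introduces a discrepancy divisor, so $\omega_{\widetilde{X}_n}=\mu^{*}\pi^{*}(\omega_{\overline{W}_n}\otimes\overline{\mcM}_n)+\Delta$ with $\Delta$ supported on the exceptional locus. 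The plan is to show that, because of the severity of the singularities in characteristic $2$, the positive part of $\Delta$ is large enough that $\widetilde{X}_n$ carries abundant pluricanonical (or suitably twisted canonical) forms --- indeed one expects $\widetilde{X}_n$ to be of general type --- precisely when $n\ge 2$, the threshold coming from a Riemann--Roch/discrepancy computation on $W_n$ in which the exponent of $\mcM_n$ enters. In particular $\widetilde{X}_n$ is then not separably uniruled.

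To conclude one invokes Koll\'ar's theorem: a smooth projective variety over a field of characteristic $0$ whose reduction modulo some prime admits a resolution that is not separably uniruled (here, of general type) cannot be ruled. Informally, a ruling of $X_n$ would spread out over $\Spec R$ and specialise so as to make $\overline{X}_n$, hence $\widetilde{X}_n$, separably uniruled, contradicting the previous paragraph. Therefore $X_n$ is not ruled, and a fortiori not rational, for $n\ge 2$.

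\medskip

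\noindent\textbf{Main obstacle.} The difficulty is concentrated in the middle step: controlling the singularities of the characteristic-$2$ reduction and the effect of resolving them. Two points must be handled by the explicit geometry of $X_n/\mbP^1$ rather than by soft arguments --- identifying the singularities of $\overline{X}_n$ along the ``bad'' curves swept out in the fibres and the resulting discrepancy divisor on $\widetilde{X}_n$, and pinning down the exact inequality on $n$ for which $\omega_{\widetilde{X}_n}$, twisted down by a suitably positive pullback from $W_n$, remains effective; it is this last computation that should reproduce the conjectural threshold $n\ge 2$. If the method only yields non-rationality for $n$ above some larger effective bound, the remaining finitely many values of $n$ would require separate treatment.
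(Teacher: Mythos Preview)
The statement you are trying to prove is recorded in the paper as a \emph{Conjecture}, not a theorem; the paper does not claim a proof of it. What the paper actually proves is the weaker Theorem~\ref{thm:main}: a \emph{very general} $X_n$ is nonrational for $n\ge 5$. Your plan is, in outline, precisely the method the paper uses for that theorem --- Koll\'ar's reduction modulo $2$, exploiting the purely inseparable double cover structure --- so as a strategy it is sound, but it does not reach the conjectured threshold $n\ge 2$ and it does not cover every $X_n$.

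Concretely, two gaps prevent your outline from proving the conjecture. First, the bigness computation does not give $n\ge 2$. In the paper's notation, Koll\'ar's sheaf is $\mcQ(\mcL,z)\cong\mcO_{Z}(H_Z-2F_Z)$; after pulling back to $X$ and squaring (to get an honest invertible sheaf through the $\tfrac12(1,1,1)$ points) one obtains $\mcM\cong\mcO_X(2H-4F)$, and this is big only when $n\ge 5$ (Lemma~\ref{lem:big}). So your expectation that ``the Riemann--Roch/discrepancy computation should reproduce the conjectural threshold $n\ge 2$'' is not borne out: the method, as it stands, yields exactly $n\ge 5$. Second, the ``very general'' hypothesis is not cosmetic. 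It is used once to make the coefficients of $a$ algebraically independent over $\mbZ$ so that the localization $R=\mbZ[\alpha_0,\dots,\alpha_{2n}]_{(2)}$ is a DVR with residue characteristic $2$ (Section~\ref{sec:redmod2}), and again in characteristic $2$ to ensure that the section $z$ has only \emph{almost nondegenerate} critical points (Lemmas~\ref{lem:cra} and \ref{lem:crz}); without this, the singularities of $\overline{X}_n$ need not be resolved by a single blowup and the lift $r^*\mcM\hookrightarrow(\Omega^2_Y)^{\otimes 2}$ can fail. Your proposal treats the reduction and the resolution as routine, but this is exactly where the restrictions enter.

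A smaller point: the invariant Koll\'ar's method produces here is a big line bundle injecting into $(\Omega^2_Y)^{\otimes 2}$, not a pluricanonical form; the conclusion is non--separable-uniruledness via Lemma~\ref{lem:crinonruled}, not that $\widetilde{X}_n$ is of general type. Your phrasing ``one expects $\widetilde{X}_n$ to be of general type'' overstates what the method delivers and is not what the paper asserts.
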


We note that $X_0$ and $X_1$ are both rational.
We state the main theorem which supports Conjecture \ref{conj:nonrat}.

\begin{Thm} \label{thm:main}
For $n \ge 5$, a very general  $X_n$ is not rational.
\end{Thm}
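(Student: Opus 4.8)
The plan is to prove Theorem~\ref{thm:main} by Koll\'ar's reduction modulo $p$ method with $p=2$, the natural prime for a degree~$2$ del Pezzo fibration since the general fibre is a double cover of $\mbP^2$ branched over a quartic. The first step is to fix a workable model of the family to which a very general $X_n$ belongs: applying the morphism associated with the relative anticanonical system $|-K_{X_n/\mbP^1}|$ one realises $X_n$ as a double cover $\sigma\colon X_n\to\mbF_n$ of the $\mbP^2$-bundle $\mbF_n=\mbP_{\mbP^1}(\mcE_n)$ over $\mbP^1$, where $\mcE_n=(\pi_n)_*\mcO_{X_n}(-K_{X_n/\mbP^1})$, branched over a relative quartic $B_n$; concretely $X_n=\{w^2=f_n\}$ inside the fibrewise weighted projective bundle with weights $(2,1,1,1)$ over $\mbP^1$, the way the quartic $f_n$ depends on the base coordinate encoding the integer $n$ (so $\deg B_n=4$ along the fibres and a positive multiple of the fibre class along $\mbP^1$, growing with $n$). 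Two things must be checked here: that this is birational to Ahmadinezhad's $X_n$ of \cite{Ahm1} (so that the statement proved really is Theorem~\ref{thm:main}), and that for a very general choice of $f_n$ the variety $X_n$ is normal with at worst terminal singularities. Since every rational threefold is ruled, it then suffices to show that a very general such double cover is not ruled.

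Next one spreads the construction out: there is an irreducible scheme $\mcS_n$ of finite type over $\Spec\mbZ$ and a family $\mathfrak{X}_n\to\mcS_n$ whose characteristic-zero fibres are the members of $\{X_n\}$. As ruledness of the fibres is a countable union of locally closed conditions on $\mcS_n$, Koll\'ar's specialisation principle reduces the theorem to producing, for each $n\ge5$, a closed point $s\in\mcS_n$ of residue characteristic $2$ such that the fibre $\mathfrak{X}_{n,s}$ over $\overline{\mbF}_2$ admits a resolution $Z_n$ that is \emph{not} separably uniruled. For this one degenerates $X_n$ so that modulo $2$ the branch divisor $B_n$ becomes a suitably special (non-reduced, or everywhere inseparably tangent) divisor $\bar B_n$; then in characteristic $2$ the double cover $\sigma_0\colon\mathfrak{X}_{n,s}\to\mbF_{n,\overline{\mbF}_2}$ is purely inseparable along the fibres, $d(w^2-\bar f_n)=-d\bar f_n$ vanishes on $\mathfrak{X}_{n,s}$, and the cover is severely singular --- far from the smooth del Pezzo fibration the honest reduction would give. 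One resolves these singularities by an explicit (weighted) blow-up of $\mbF_{n,\overline{\mbF}_2}$ together with the induced cover --- no appeal to general resolution of singularities in positive characteristic is needed --- obtaining $Z_n$, and then computes $\omega_{Z_n}$: because the characteristic-$2$ cover detects the whole branch divisor rather than half of it, the resolution acquires a contribution of the shape $\sigma_0^*(K_{\mbF_n}+B_n)$, corrected by the exceptional divisors, and $K_{\mbF_n}+B_n=(K_{\mbF_n}+\tfrac12B_n)+\tfrac12B_n$ differs from $K_{X_n}$ by $\tfrac12B_n$, which is positive in the $\mbP^1$-direction. A numerical comparison --- and this is exactly where the bound enters --- shows that for $n\ge5$ this vertical positivity outweighs the exceptional corrections, so $\omega_{Z_n}$ contains a big invertible subsheaf; hence $H^0(Z_n,\omega_{Z_n}^{\otimes m})$ grows with $m$, $Z_n$ is not separably uniruled, and the specialisation principle gives that a very general $X_n$ is not ruled, hence not rational.

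The step I expect to be the main obstacle is the characteristic-$2$ analysis: choosing the right special reduction $\bar B_n$ of the branch divisor, describing precisely the singularities of the resulting wild double cover for $\bar f_n$ \emph{general in the $n$-twisted linear system forced by the construction} (which is a proper subsystem of all relative quartics, possibly further constrained by the $G_K$-symmetry used in \cite{Ahm1}), carrying out an explicit resolution with control of the exceptional divisors and their discrepancies, and computing $\omega$ of the resolution sharply enough to pin down the non-vanishing of pluricanonical forms \emph{exactly} for $n\ge5$. A related subtlety is to verify that the general member of this constrained linear system is still generic enough for the resolution to behave as in the generic case; this genericity is presumably what fails for $n=2,3,4$, which is why the method stops short of the bound $n\ge2$ predicted by Conjecture~\ref{conj:nonrat} (recall that $X_0$ and $X_1$ are rational).
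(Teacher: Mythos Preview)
Your overall architecture---reduce modulo $2$, exploit that the double cover becomes purely inseparable, then apply Koll\'ar's non-separably-uniruled criterion via specialisation---matches the paper. But the invariant you propose to use is wrong, and this makes the argument collapse at the decisive step.

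You aim to show that $\omega_{Z_n}$ (or a big invertible subsheaf of it) is big on the resolution $Z_n$. This cannot hold. The morphism $X_n\to\mbP^1$ is still a del Pezzo fibration after reduction modulo $2$: the general fibre is $(a y^2+f=0)\subset\mbP(1,1,1,2)$, whose minimal resolution is a weak del Pezzo surface (in fact it acquires only $A_1$ points in characteristic $2$), hence a rational surface with $-K$ nef and big. Thus $K_{Z_n}$ restricted to the general fibre is not even pseudoeffective, so $\omega_{Z_n}$ is not big for any $n$. Your heuristic ``the characteristic-$2$ cover detects the whole branch divisor rather than half of it, so $K$ jumps by $\tfrac12 B_n$'' is not how the inseparable Riemann--Hurwitz works; no amount of vertical positivity from $B_n$ can overcome the negativity of $K$ along the fibres.

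What Koll\'ar's method actually produces---and what the paper uses---is a big invertible subsheaf of $(\Omega_Y^{\,2})^{\otimes 2}$, not of $\omega_Y=\Omega_Y^{\,3}$. Concretely, writing $z=y^2$ exhibits $X$ as $Z[\sqrt{z}]$ for $Z=(az+f=0)$, and one forms $\mcQ(\mcL,z)\cong\mcL^2\otimes\omega_Z$; then $\rho^*\mcQ(\mcL,z)\hookrightarrow(\Omega_X^{2})^{\vee\vee}$, and after squaring one gets $\mcM\cong\mcO_X(2H-4F)\hookrightarrow((\Omega_X^{2})^{\otimes 2})^{\vee\vee}$. The bound $n\ge5$ enters precisely in checking that $2H-4F$ is big, and Lemma~\ref{lem:crinonruled} then gives non-separable-uniruledness. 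Two further points where your plan diverges from what is needed: no special degeneration of the branch quartic is required or performed---the polynomial $f$ is the fixed Klein quartic throughout, only $a$ varies, and the inseparability is automatic from $y^2$ in characteristic $2$; and because $f$ is fixed, one must verify by hand (Lemma~\ref{lem:crf}) that the Klein quartic in characteristic $2$ has only nondegenerate critical points, which feeds into showing that $z$ has almost nondegenerate critical points so that a single blow-up resolves $X$ and the injection into $(\Omega_Y^2)^{\otimes 2}$ survives.
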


We refer the readers to Section \ref{sec:proof} for the meaning of very generality.
Note that $X_n/\mbP^1$ is a del Pezzo fibration of degree $2$ and it satisfies the so called $K$-condition (or $K^2$-condition) for $n \ge 2$.
Thus, in view of the results of Pukhlikov \cite{Puk} and Grinenko \cite{Gri00,Gri03,Gri06} on nonsingular del Pzeeo fibrations of degree $2$, if $X_n$ were nonsingular, then it would be birationally rigid for $n \ge 2$, which would imply nonrationality in a strong sense.
Unfortunately, $X_n$ is singular and we cannot apply the above results directly.
Instead, we apply the Koll\'{a}r's reduction modulo $p$ method introduced in \cite{Kol1} (see also \cite{Kol}) to prove nonrationality of $X_n$.

This is relevant to the study of embeddings of the Klein simple group $G_K = \mathrm{PSL}_2 (\mbF_7)$ into the Cremona group $\Cr_3 (\mbC)$ of rank $3$.
If we are given a finite simple subgroup $G$ of $\Cr_3 (\mbC^3)$, then there is a rational $G$-Mori fiber space $X/S$ such that the embedding $G \subset \Cr_3 (\mbC)$ is given by $G \subset \Aut (X) \subset \Bir (X) \cong \Cr_3 (\mbC)$ (see \cite[Section 4.2]{Prokhorov}).
Such a $G$-Mori fiber space $X/S$ is called a {\it Mori regularization} of $G \subset \Cr_3 (\mbC)$.
Moreover, two embeddings $G_1$ and $G_2$ into $\Cr_3 (\mbC)$ of a finite simple subgroup $G$ are conjugate if and only if there is a $G$-equivariant birational map between Mori regularizations $X_1/S_1$ and $X_2/S_2$ of $G_1 \subset \Cr_3 (\mbC)$ and $G_2 \subset \Cr_3 (\mbC)$, respectively.
In \cite{CS}, Cheltsov and Shramov proved that there are at least three non-conjugate embeddings of $\mathrm{PSL}_2 (\mbF_7)$ into $\Cr_3 (\mbC)$ and each of them comes from rational ($G_K$-)Fano threefolds.
Theorem \ref{thm:main} implies that, for $n \ge 5$, a very general $X_n/\mbP^1$ cannot be a Mori regularization of any subgroup of $\Cr_3 (\mbC)$ isomorphic to $G_K$. 
If Conjectures \ref{conj:KleindP} and \ref{conj:nonrat} are both true, then it follows that there is no embedding of $\mathrm{PSL}_2 (\mbF_7)$ into $\Cr_3 (\mbC)$ coming from $G_K$-Mori fiber space over $\mbP^1$ other than $\mbP^1 \times \mbP^2/\mbP^1$, $X_0/\mbP^1$ and $X_1/\mbP^1$.

We explain the content of this paper.
In Section \ref{sec:constdP}, we give an explicit construction of the varieties $X_n$.
They are constructed as hypersurfaces of suitable weighted projective space bundle over $\mbP^1$.
Then we show that $X_n/\mbP^1$ is indeed a del Pezzo fibration for $n \ge 1$.
In Section \ref{sec:proof}, we prove the main theorem.
Proof will be done by the Koll\'{a}r's reduction modulo $p$ method, which we briefly recall in Section \ref{sec:Kol}.
The very first reduction step is done in Section \ref{sec:redmod2}
In Section \ref{sec:biglb}, we work over a field of characteristic $2$ and  construct a specific big line bundle on some nonsingular model of $X_n$ by making use of the purely inseparable double covering structure.
This will complete the proof in view of the non-ruledness criterion given in Lemma \ref{lem:crinonruled}.

\begin{Ack}
The author would like to thank Dr.~Hamid Ahmadinezhad for useful conversations.
He also would like to thank Professor Ivan Cheltsov for showing his interest on this work.
The author is partially supported by JSPS KAKENHI Grant Number 26800019.
\end{Ack}

\section{Construction of del Pezzo fibrations}
\label{sec:constdP}

We construct del Pezzo fibrations $X_n/\mbP^1$ as hypersurfaces in suitable weighted projective space bundles over $\mbP^1$.
We refer the readers to \cite{BC} for Cox rings (which is also known as homogeneous coordinate rings) of toric varieties.
Through the present section, we work over $\mbC$.

Throughout this paper, we define $f = x_0^3 x_1 + x_1^3 x_2 + x_2^3 x_0$.
We see that $f$ is the defining polynomial of the Klein quartic curve whose automorphism group is the Klein simple groups.
Let $P_n$ be the projective simplicial toric variety with Cox ring 
\[
\Cox (P_n) = \mbC [w_0,w_1,x_0,x_1,x_2,y]
\] 
which is $\mbZ^2$-graded as
\[
\begin{pmatrix}
w_0 & w_1 & x_0 & x_1 & x_2 & y \\
1 & 1 & 0 & 0 & 0 & -n \\
0 & 0 & 1 & 1 & 1 & 2
\end{pmatrix}
\]
and with the irrelevant ideal $I = (w_0,w_1) \cap (x_0,x_1,x_2,y)$, that is, $P_n$ is the geometric quotient
\[
P_n = (\mbA^6 \setminus V (I)) / (\mbC^*)^2,
\]
where the action of $(\mbC^*)^2$ on $\mbA^6 = \Spec \Cox (P_n)$ is given by the above matrix.
Note that the Weil divisor class group $\Cl (P_n)$ is isomorphic to $\mbZ^2$.
There is a natural morphism $\Pi \colon P \to \mbP^1$ defined as the projection to coordinates $w_0,w_1$, and this realizes $P$ as a weighted projective space bundle over $\mbP^1$ whose fibers are $\mbP (1,1,1,2)$.
For a non-negative integer $n$ and homogeneous polynomials $a \in \mbC [w_0,w_1]$ and $f \in \mbC [x_0,x_1,x_2]$ of degree respectively $2 n$ and $4$, we define
\[
X_n = (a y^2 + f = 0) \subset P_n.
\]
We define $\pi := \Pi|_{X_n} \colon X_n \to \mbP^1$.
Throughout this paper, we assume that $a$ does not have a multiple component.

\begin{Rem} \label{rem:coin}
We explain that $X_n/\mbP^1$ constructed as above coincides with the one given in \cite[Section 3.1]{Ahm1}.
We choose and fix any pair  $b, c \in \mbC [w_0,w_1]$ of homogeneous polynomials of degree $n$ such that $a = b c$ and define
\[
\mcX'_n = (b t^2 + c f = 0) \subset \mbP^1_{w_0,w_1} \times \mbP (1_{x_0},1_{x_1},1_{x_2},2_t).
\]
Let $\pi' \colon \mcX'_n \to \mbP^1$ be the projection to the coordinates $w_0,w_1$.
Then, $(c = t = f = 0) \subset \mcX'_n$ is a disjoint union of $n$-curves $C'_1,\dots,C'_n$ and $\mcX'_n$ has a singularity of type $\mbC \times \frac{1}{2} (1,1)$ along each $C'_i$.
Blowing up $\mcX'_n$ along those curves and then contracting the strict transforms of the $\pi'$-fibers containing $C'_i$, we obtain a birational $\mcX'_n \ratmap \mcX_n$ to the del Pezzo fibration $\mcX_n \to \mbP^1$ constructed in \cite{Ahm1}.

Now we have a birational map $\Psi \colon P_n \ratmap \mbP^1 \times \mbP (1,1,1,2)$ defined by the correspondence $t = c y$.
It is easy to see that $\Psi$ restricts to a birational map $\psi \colon X_n \ratmap \mcX'_n$.
Moreover, it is straightforward to see that $\psi^{-1} \colon \mcX'_n \ratmap X_n$ is obtained by blowing up $\mcX'_n$ along $C'_1,\dots,C'_n$ and then contracting the strict transforms of the fibers containing $C'_i$.
This shows $\mcX_n/\mbP^1 \cong X_n/\mbP^1$.
\end{Rem}

\begin{Rem}
We explain that both $X_0$ and $X_1$ are rational.
If $n = 0$, then we have $X_0 \cong \mbP^1 \times S$, where $S = (y^2 + f = 0) \subset \mbP (1,1,1,2)$ is a (nonsingular) del Pezzo surface of degree $2$, and thus $X_0$ is clearly rational.
Suppose $n = 1$.
Then, as explained in Remark \ref{rem:coin}, $X_1$ is birational to $\mcX'_1 = (b y^2 + c f = 0) \subset \mbP^1 \times \mbP (1,1,1,2)$, where, where $b,c \in \mbC [w_0,w_1]$ are homogeneous polynomials of degree $1$ such that $a = b c$.
It is clear that the projection $\mcX'_1 \ratmap \mbP (1,1,1,2)$ is birational.
Hence $\mcX'_1$ and $X_1$ are rational.
\end{Rem}

In the rest of this section, we show that $\pi \colon X_n \to \mbP^1$ is indeed a del Pezzo fibration for $n \ge 1$.

\begin{Def}
Let $\pi \colon X \to \mbP^1$ be a surjective morphism with connected fibers from a normal projective $3$-fold $X$.
We say that $\pi \colon X \to \mbP^1$ is a {\it del Pezzo fibration} over $\mbP^1$ if the following conditions are satisfied:
\begin{enumerate}
\item $X$ is $\mbQ$-factorial and has only terminal singularities.
\item $-K_X$ is $\pi$-ample.
\item $\rho (X) = 2$.
\end{enumerate}
\end{Def}


\begin{Rem}
We explain the natural affine open subsets of $P_n$ and $X_n$.
Since we will work over an algebraically closed field of characteristic $2$ in the next section, we assume in this remark that the ground field of $P_n$ and $X_n$ is an algebraically closed field $\K$ of arbitrary characteristic.

We denote by $U_{w_i,x_j}$ the open subset $(w_i \ne 0) \cap (x_j \ne 0) \subset P_n$ and by $U_{w_i,y}$ the open subset $(w_i \ne 0) \cap (y \ne 0) \subset P_n$.
We see that $P_n$ is covered by $U_{w_i,x_j}$ and $U_{w_i,y}$ for $i = 0,1$ and $j = 0,1,2$.

We see that $U_{w_0,x_0}$ is an affine $4$-space $\mbA^4$ and the restrictions of $w_1, x_1,x_2,y$ on $U_{w_0,x_0}$ form affine coordinates of $U_{w_0,x_0}$.
Indeed, if we denote by $\tilde{w}_1 = w_1/w_0$, $\tilde{x}_i = x_i/x_0$ for $i = 1,2$ and $\tilde{y} = y w_0^n/x_0^2$, then $U_{w_0,x_0}$ is the affine $4$-space with affine coordinates $\tilde{w}_0,\tilde{x}_1,\tilde{x}_2,\tilde{y}$.
The affine scheme $X_n \cap U_{w_0,x_0}$ is defined by the equation $\tilde{y} a (1,\tilde{w}_1) + f (1,\tilde{x}_1,\tilde{x}_2) = 0$.
The same description applies for the other $U_{w_i,x_j}$.

We see that $U_{w_0,y}$ is the quotient $\mbA^4/\bmu_2$ of $\mbA^4$ by the action of $\bmu_2 = \Spec \K [t]/(t^2)$.
Indeed, if we denote by $\tilde{w}_1 = w_1/w_0$, $\tilde{x}_i = x_i w_0^{n/2}/y^{1/2}$ for $i = 0,1,2$, then $U_{w_0,y}$ is the quotient of $\mbA^4$ with coordinates $\tilde{w}_1, \tilde{x}_0,\tilde{x}_1,\tilde{x}_2$ under the $\bmu_2$-action given by
\[
\tilde{w}_0 \mapsto \tilde{w}_0, \ 
\tilde{x}_i \mapsto \tilde{x}_i \otimes \bar{t},
\]
where $\bar{t} \in \K [t]/(t^2)$.
Here, the above operation defines a ring homomorphism $R \to R \otimes \K [t]/(t^2)$, where $R = \K [\tilde{w}_0,\tilde{x}_0,\tilde{x}_1,\tilde{x}_2]$, and we have $\mbA^4/\bmu_2 = \Spec R^{\bmu_2}$.
When $\K = \mbC$, we can replace $\bmu_2$ with $\mbZ/2 \mbZ$ and the action is given simply by $\tilde{w}_0 \mapsto \tilde{w}_0$ and $\tilde{x}_i \mapsto - \tilde{x}_i$.
The affine scheme $X_n \cap U_{w_0,y}$ is the quotient of the affine scheme defined $a (1,\tilde{w}_1) + f (\tilde{x}_0,\tilde{x}_1,\tilde{x}_2) = 0$ by the $\bmu_2$-action.
The same description applies for $U_{w_1,y}$.

Sometimes we will abuse the notation and say that $U_{w_0,x_0}$ is the affine $4$-space $\mbA^4$ with coordinates $w_1,x_1,x_2,y$ and $X_n \cap U_{w_0,x_0}$ is defined by $y a (1,w_1) + f (1,x_1,x_2) = 0$.
\end{Rem}

\begin{Lem}
$X_n$ is nonsingular outside $(x_0 = x_1 = x_2 = 0) \cap X$ and $X_n$ has a singular point of type $\frac{1}{2} (1,1,1)$ at each point of $(x_0 = x_1 = x_2 = 0) \cap X$.
\end{Lem}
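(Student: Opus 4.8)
\emph{Proof strategy.}
The plan is to argue chart by chart, using the affine cover $P_n=\bigcup_{i,j}U_{w_i,x_j}\cup\bigcup_i U_{w_i,y}$ described above. Set $Z=(x_0=x_1=x_2=0)\subset P_n$, a section $Z\cong\mbP^1_{w_0,w_1}$ along which $y\ne 0$, so $Z\subset U_{w_0,y}\cup U_{w_1,y}$. Since $f(0,0,0)=0$, the equation $ay^2+f=0$ restricts on $Z$ to $ay^2=0$, so $Z\cap X_n$ is exactly the finite set of points of $Z$ lying over the zero locus of $a$ on $\mbP^1$. Hence it suffices to prove: (i) $X_n$ is nonsingular on every chart $U_{w_i,x_j}$; (ii) $X_n$ is nonsingular on $U_{w_i,y}\setminus Z$ for $i=0,1$; and (iii) $X_n$ has a $\tfrac12(1,1,1)$ singularity at each point of $Z\cap X_n$.

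For (i), recall that on $U_{w_0,x_0}\cong\mbA^4$ with coordinates $w_1,x_1,x_2,y$ the threefold $X_n$ is the hypersurface $g:=y\,a(1,w_1)+f(1,x_1,x_2)=0$ (here $a(1,w_1)\not\equiv 0$, as $a$ has no multiple component). At a hypothetical singular point the Jacobian criterion forces $\partial g/\partial y=a(1,w_1)=0$ and $(\partial f/\partial x_k)(1,x_1,x_2)=0$ for $k=1,2$; together with $g=0$ this gives $f(1,x_1,x_2)=0$, and Euler's identity $4f=\sum_k x_k(\partial f/\partial x_k)$ then yields $(\partial f/\partial x_0)(1,x_1,x_2)=0$ as well. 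Thus $(1\colon x_1\colon x_2)$ would be a singular point of the Klein quartic $(f=0)\subset\mbP^2$, which is nonsingular — a contradiction. The remaining charts $U_{w_i,x_j}$ are identical by symmetry.

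For (ii) and (iii), I pass to the double cover. On $U_{w_0,y}=\mbA^4/\bmu_2$, with $\mbA^4=\mbA^4_{\tilde w_1,\tilde x_0,\tilde x_1,\tilde x_2}$ and $\bmu_2$ acting over $\mbC$ by $\tilde w_1\mapsto\tilde w_1$, $\tilde x_i\mapsto-\tilde x_i$, the preimage of $X_n$ is the hypersurface $\hat X_n=(\hat g=0)$ with $\hat g:=a(1,\tilde w_1)+f(\tilde x_0,\tilde x_1,\tilde x_2)$. One first checks $\hat X_n$ is nonsingular: at a singular point $(\partial f/\partial x_i)(\tilde x)=0$ for all $i$, so $\tilde x=0$ since the Klein quartic is nonsingular; then $\hat g=0$ gives $a(1,\tilde w_1)=0$ while $\partial\hat g/\partial\tilde w_1=0$ says $\tilde w_1$ is a multiple root of $a(1,w_1)$, contradicting the hypothesis on $a$. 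The fixed locus of $\bmu_2$ on $\mbA^4$ is the line $\tilde x_0=\tilde x_1=\tilde x_2=0$, which meets $\hat X_n$ in the finite set over $Z\cap X_n$; off this set $\bmu_2$ acts freely on the smooth threefold $\hat X_n$, giving (ii). At a fixed point $p=(w_1^0,0,0,0)$, where $w_1^0$ is a simple root of $a(1,w_1)$, we have $(\partial/\partial w_1)a(1,w_1^0)\ne 0$, so $T_p\hat X_n=\langle\partial_{\tilde x_0},\partial_{\tilde x_1},\partial_{\tilde x_2}\rangle$, on which $\bmu_2$ acts by $-1$. By linearizability of the $\bmu_2$-action near $p$, the germ of $X_n$ at the image of $p$ is the germ of $\mbA^3/\bmu_2=\tfrac12(1,1,1)$ at the origin, which is (iii); the chart $U_{w_1,y}$ is treated identically.

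The computation is elementary throughout. The only point where the hypothesis that $a$ has no multiple component is genuinely needed — and the one step requiring a little care — is (iii): one must verify that $\hat X_n$ is smooth precisely at the points over $Z\cap X_n$ (equivalently, that $a(1,w_1)$ is squarefree), since this is what forces the tangent space there to be the full $3$-dimensional $(-1)$-eigenspace and identifies the singularity as $\tfrac12(1,1,1)$ rather than something worse.
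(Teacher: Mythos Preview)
Your proof is correct and follows essentially the same route as the paper's: check smoothness on the charts $U_{w_i,x_j}$ via the Jacobian and the nonsingularity of the Klein quartic, then pass to the orbifold chart over $U_{w_i,y}$ and use squarefreeness of $a$ to identify the $\tfrac12(1,1,1)$ points.

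Two minor remarks. First, the defining equation on $U_{w_0,x_0}$ should read $y^{2}\,a(1,w_1)+f(1,x_1,x_2)=0$, not $y\,a(1,w_1)+f=0$; you appear to have carried over a typo from the preceding remark (the paper's own proof of the lemma uses $y^{2}$), and your Jacobian argument goes through unchanged with the corrected equation. Second, your step~(ii) is redundant: the six charts $U_{w_i,x_j}$ already cover all of $P_n\setminus(x_0=x_1=x_2=0)$, so once (i) is established the only points left to examine are those of $Z\cap X_n$, which is exactly how the paper organizes the argument.
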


\begin{proof}
Set $U = U_{w_0,x_0}$ which is an affine $4$-space with affine coordinates $w_1,x_1,x_2,y$ and $X_n \cap U$ is defined by $y^2 a_0 +  f_0 = 0$, where $a_0 = a (1,w_1)$ and $f_0 = f (1,x_1,x_2)$.
It is straightforward to see that
\[
\begin{split}
\Sing (X_n \cap U_n) &= \left( y^2 \frac{\prt a_0}{\prt w_1} = \frac{\prt f_0}{\prt x_1} = \frac{\prt f_0}{\prt x_2} = 2 y a_0 = y^2 a_0 + f_0 = 0 \right) \\
&\subset \left( \frac{\prt f_0}{\prt x_1} = \frac{\prt f_0}{\prt x_2} = f_0 = 0 \right) = \emptyset,
\end{split}
\] 
where the last equality holds since $f_0 = f (1,x_1,x_2)$ defines in $\mbA^2$ a nonsingular curve.
By symmetry, we conclude that $X \cap U_{w_i,x_j}$ is nonsingular for $i = 0,1$ and $j = 0,1,2$.
Since the open subsets $U_{w_i,x_j}$ for $i = 0,1$ and $j = 0,1,2$ cover $P_n \setminus (x_0 = x_1 = x_2 = 0)$, we see that $X_n$ is nonsingular outside $(x_0 = x_1 = x_2 = 0) \cap X_n$.

Let $P \in (x_0 = x_1 = x_2 = 0) \cap X_n$.
Then $a (P) = 0$ and we may assume that $w_1$ vanishes at $P$ after replacing $w_0,w_1$.
We work on $U = U_{w_0,y} \cong \mbA^4/\bmu_2$.
We see that $X_n \cap U$ is the quotient of $V := (a (1,w_1) + f = 0) \subset \mbA^4$ by the $\bmu_2$-action and $P$ corresponds to the origin.
Since $a$ vanishes at $P$ and it does not have a multiple component, we have $a (1,w_1) = w_1 + (\text{higher order terms})$, so that $x_0,x_1,x_2$ form local coordinates of $V$ at the origin.
Thus the point $P$ is of type $\frac{1}{2} (1,1,1)$.
\end{proof}

For $n \ge 1$, we construct a birational morphism $\theta \colon X_n \ratmap Z_n$ as follows.
We set $\xi_0 := w_0^n, \xi_1 := w_0^{n-1} w_1, \dots, \xi_n := w_1^n$ and let 
\[
\Theta \colon P_n \to \mbP (1_{x_0},1_{x_1},1_{x_2},2_{y_0},\dots,2_{y_n})
\]
be the toric morphism defined by the correspondence $y_i = y \xi_i$. 
We see that the image of $\Theta$, which we denote by $T_n$, is defined by $h_1 = \cdots = h_N = 0$, where $h_1, \dots, h_N$ are the homogeneous polynomials in $y_0,\dots,y_n$ defining the image of the $n$-ple Veronese embedding $\mbP^1 \inj \mbP^n_{y_0,\dots,y_n}$.
We see that $\Theta \colon P_n \to T_n$ is a birational morphism contracting the divisor $(y = 0) \cong \mbP^1 \times \mbP^2$ to the plane $\Delta := (y_0 = \cdots = y_n = 0) \subset T_n$.
It follows that $T_n$ is a projective simplicial toric variety with Picard number $1$.
The image of $X_n$ under $\Theta$ is a hypersurface $V_n$ in $T_n$ defined by $q + f = 0$, where $q = q (y_0,\dots,y_n)$ is a quadratic polynomial such that $q (y \xi_0,\dots,y \xi_n) = a y^2$.
We see that $\theta := \Theta|_{X_n} \colon X_n \to V_n$ is a birational morphism contracting the divisor $(y = 0) \cap X_n \cong \mbP^1 \times C$ to the curve $\Delta \cap V_n \cong C$, where $C$ is the plane curve defined by $f = 0$.

\begin{Lem} \label{lem:picZ}
If $n \ge 1$, then $V_n$ is a normal projective $\mbQ$-factorial $3$-fold with Picard number $1$.
\end{Lem}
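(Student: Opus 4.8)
The plan is to read off all three assertions from the ambient toric fourfold $T_n$, using crucially that $T_n$ has Picard number one, so that $V_n$ is automatically an ample Weil divisor in it, together with a Lefschetz-type theorem for divisor class groups. Being a projective simplicial toric variety, $T_n$ is $\mbQ$-factorial, has only quotient singularities, is smooth in codimension one, and $\Cl (T_n)$ has rank one; moreover $H := \mcO_{T_n}(1)$ is ample, since it is the restriction to $T_n$ of the ample class $\mcO (1)$ on $\mbP (1,1,1,2,\dots,2)$. The hypersurface $V_n = (q + f = 0)$ has class $4 H$, hence is an ample Weil divisor, and $\dim V_n = 3$.

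Next I would determine the singularities of $V_n$ by means of the birational morphism $\theta \colon X_n \to V_n$, which is an isomorphism over $T_n \setminus \Delta$ and contracts $E = (y = 0) \cap X_n \cong \mbP^1 \times C$ to $C = \Delta \cap V_n$. Over $T_n \setminus \Delta$ we have $V_n \cong X_n$, so there $V_n$ is smooth away from the $\frac{1}{2}(1,1,1)$-points of $X_n$, at which $V_n$ again has a $\frac{1}{2}(1,1,1)$-singularity; note that these points lie off $E$ (where $y \ne 0$) and lie on the Veronese curve $(x_0 = x_1 = x_2 = 0) \cap T_n$, along which $T_n$ itself is $\frac{1}{2}(1,1,1)$. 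Along $C$, since $X_n$ is smooth along $E$, the morphism $E \to C$ is a trivial $\mbP^1$-bundle, and $N_{E/X_n}$ has degree $-n$ on every fibre of $E \to C$, the contraction $\theta$ collapses, in each slice transverse to $C$, a smooth $(-n)$-curve inside a smooth surface; hence along $C$ the threefold $V_n$ acquires the equisingular family of Du Val singularities of type $A_{n-1}$, that is, transverse quotient singularities. (Running the same computation for $\Theta \colon P_n \to T_n$ shows that $T_n$ already has transverse $A_{n-1}$-singularities along $\Delta$, so these singularities of $V_n$ are inherited from those of $T_n$.) Therefore $V_n$ has only quotient singularities; in particular $V_n$ is normal and $\mbQ$-factorial.

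For the Picard number I would invoke the Grothendieck--Lefschetz theorem for class groups: the previous step shows that $V_n$ is a quasi-smooth hypersurface in $T_n$ — all of its singularities are inherited from the quotient singularities of $T_n$ — and $V_n$ is an ample divisor of dimension $3$ in the simplicial toric fourfold $T_n$, so the restriction map $\Cl (T_n) \to \Cl (V_n)$ is an isomorphism. Consequently $\Cl (V_n)$ has rank one, and since $V_n$ is $\mbQ$-factorial this gives $\rho (V_n) = 1$, completing the proof.

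I expect the real difficulty to be the $\mbQ$-factoriality, or equivalently the precise local analytic type of $V_n$ along the curve $C$: a divisorial contraction of a smooth threefold along a curve may well produce a non-$\mbQ$-factorial singularity, for instance the ordinary node $xy = zw$, so one genuinely has to check that here the outcome is the equisingular $A_{n-1}$-family, which is an honest quotient singularity and hence $\mbQ$-factorial. This I would do by exploiting the triviality of the $\mbP^1$-bundle $E \to C$ and the constancy of the fibrewise degree of $N_{E/X_n}$, working in explicit local charts of $X_n$ and of $T_n$ transverse to $\Delta$.
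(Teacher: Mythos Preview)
Your overall strategy is essentially the paper's: identify $V_n$ as a quasi-smooth ample hypersurface in the simplicial toric fourfold $T_n$, hence with only quotient singularities (normal, $\mbQ$-factorial), and then apply a Lefschetz-type theorem for class groups to conclude $\rho(V_n)=\rho(T_n)=1$. The paper cites Roan's result for this last step; your ``Grothendieck--Lefschetz for class groups'' is the same mechanism. One organisational difference: the paper deduces $\mbQ$-factoriality from the fact that $\theta$ is an extremal divisorial contraction of the $\mbQ$-factorial $X_n$, whereas you deduce it from the quotient-singularity description; both are fine, and your route is arguably more direct.

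There is, however, a terminological slip you should fix. Contracting a smooth $(-n)$-curve in a smooth surface does \emph{not} produce the Du~Val singularity $A_{n-1}$; it produces the cyclic quotient singularity $\frac{1}{n}(1,1)$, i.e.\ the cone over the rational normal curve of degree $n$. The $A_{n-1}$ singularity is $\frac{1}{n}(1,n-1)$, and for $n\ge 3$ the singularity $\frac{1}{n}(1,1)$ is not Du~Val (it is log terminal but not canonical). This matches the paper's statement that $T_n$ is of type $\mbP^2\times\frac{1}{n}(1,1)$ along $\Delta$ and that $V_n$ meets $\Delta$ transversally. Your geometric analysis of $\theta$ is correct and your conclusion---that $V_n$ has only quotient singularities along $C$---survives unchanged, since $\frac{1}{n}(1,1)$ is still a cyclic quotient singularity; only the name and the ``Du~Val'' label are wrong.
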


\begin{proof}
Note that $X_n$ is $\mbQ$-factorial since it has only quotient singularities.
It follows that $V_n$ is $\mbQ$-factorial since $\theta$ is an extremal contraction (which is not necessarily $K_{X_n}$-negative).
We see that the singularity of $T_n$ along the plane $\Delta$ is of type $\mbP^2 \times \frac{1}{n} (1,1)$ and $V_n$ intersects $\Delta$ transversally.
Moreover, outside the curve $\Delta \cap V_n$, singular points of $V_n$ are of type $\frac{1}{2} (1,1,1)$.
This implies that $V_n$ is a $V$-submanifold of $T_n$ and thus, by \cite[Proposition 3.5]{BC}, $V_n$ is quasi-smooth in $T_n$.
Here, we refer the reader to \cite[Section 3]{BC} for the definitions of $V$-submanifold and quasi-smoothness.
It then follows from \cite[Proposition 4]{Roan} that $\rho (V_n) = \rho (T_n) = 1$ since $V_n$ is quasi-smooth hypersurface defined by a global section of an ample divisor on $T_n$.
\end{proof}

\begin{Lem}
For $n \ge 1$, the fibration $\pi \colon X_n \to \mbP^1$ is a del Pezzo fibration.
\end{Lem}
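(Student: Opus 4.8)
The plan is to verify the three conditions in the definition of a del Pezzo fibration for $\pi\colon X_n\to\mbP^1$, using the structural results already established. The preliminary points are immediate: $X_n$ is a three-dimensional closed subvariety of the projective toric variety $P_n$, hence a projective $3$-fold; by the previous lemma its only singularities are of type $\frac12(1,1,1)$, so in particular it is normal; and $\pi=\Pi|_{X_n}$ is proper and dominant, hence surjective, with connected fibres because a general fibre is the irreducible del Pezzo surface $(y^2+f=0)\subset\mbP(1,1,1,2)$, so the generic fibre is geometrically integral and Stein factorisation gives $\pi_*\mcO_{X_n}=\mcO_{\mbP^1}$. Condition~(1) then follows from the previous lemma as well: $X_n$ has only quotient singularities, hence is $\mbQ$-factorial, and the three-dimensional cyclic quotient singularity $\frac12(1,1,1)$ is terminal.

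For condition~(2) I would compute $K_{X_n}$ by adjunction on the quasi-smooth hypersurface $X_n\subset P_n$. The anticanonical class of $P_n$ is the sum of the torus-invariant prime divisors, which by adding the columns of the grading matrix equals $\mcO_{P_n}(2-n,5)$, while $X_n$ is defined by a section of $\mcO_{P_n}(0,4)$, the common class of $a y^2$ and $f$; hence $-K_{X_n}\sim\mcO_{P_n}(2-n,1)|_{X_n}$. Since $\mcO_{P_n}(1,0)=\Pi^*\mcO_{\mbP^1}(1)$, the class $-K_{X_n}$ is the restriction of $\mcO_{P_n}(0,1)$ twisted by the pullback of a divisor from $\mbP^1$, and $\mcO_{P_n}(0,1)$ restricts to the ample class $\mcO_{\mbP(1,1,1,2)}(1)$ on every fibre of $\Pi$, hence is $\Pi$-ample; since twisting by the pullback of a line bundle from the base preserves relative ampleness, $-K_{X_n}$ is $\pi$-ample.

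For condition~(3) I would invoke Lemma~\ref{lem:picZ}, which gives that $V_n$ is $\mbQ$-factorial with $\rho(V_n)=1$. The birational morphism $\theta\colon X_n\to V_n$ is an isomorphism away from the prime divisor $E=(y=0)\cap X_n\cong\mbP^1\times C$, which is irreducible since the Klein quartic $C$ is a smooth irreducible curve, and $\theta$ contracts $E$ onto the curve $\Delta\cap V_n$; for a birational morphism of normal projective $\mbQ$-factorial varieties whose exceptional locus is a single prime divisor, the Picard number of the source exceeds that of the target by exactly one, so $\rho(X_n)=\rho(V_n)+1=2$. I expect this to be the only genuine point in the argument: because $X_n$ sits in $P_n$ as a hypersurface cut out by the class $\mcO_{P_n}(0,4)$, which is big and nef but not ample, one cannot read off $\rho(X_n)$ from a Lefschetz-type statement inside $P_n$, and this is exactly what forces the detour through the contraction $\theta$ to $V_n\subset T_n$, where the defining hypersurface is ample and the result of Roan applies. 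The remaining steps are routine; the only other place deserving a moment's care is the $\pi$-ampleness of $-K_{X_n}$ over the degenerate fibres above the zeros of $a$, but this is handled uniformly by the fibrewise criterion used above.
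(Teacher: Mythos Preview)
Your proof is correct and follows the same approach as the paper. The heart of both arguments is condition~(3): you use Lemma~\ref{lem:picZ} to get $\rho(V_n)=1$ and then observe that the birational morphism $\theta\colon X_n\to V_n$ contracts a single prime divisor, giving $\rho(X_n)=2$---exactly as the paper does. Your treatment of condition~(2) is more explicit than the paper's: you carry out the adjunction computation to identify $-K_{X_n}$ with $\mcO_{P_n}(2-n,1)|_{X_n}$ and check $\pi$-ampleness fibrewise, whereas the paper simply notes that once $\rho(X_n)=2$ the morphism $\pi$ is an extremal contraction and leaves the $\pi$-ampleness of $-K_{X_n}$ implicit (it follows since the general fibre is a del Pezzo surface). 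Your side remark about why one cannot invoke a Lefschetz-type theorem directly in $P_n$ is a nice piece of motivation that the paper omits.
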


\begin{proof}
Assume that $n \ge 1$.
We see that $X$ has only terminal singularities of type $\frac{1}{2} (1,1,1)$ and $\mbQ$-factorial.
By Lemma \ref{lem:picZ}, we have $\rho (X_n) = \rho (V_n) + 1 = 2$ since $\theta \colon X_n \to V_n$ is a birational morphism contracting a prime divisor.
This shows that $\pi$ is an extremal contraction and thus $X_n/\mbP^1$ is indeed a del Pezzo fibration.
%
\end{proof}

\begin{Rem}
The above arguments applies to more general cases without any change.
Let $g \in \mbC [x_0,x_1,x_2]$ be a homogeneous polynomial of degree $4$ such that the plane curve in $\mbP^2$ defined by $g$ is nonsingular.
Then the hypersurface $X_n = (a y^2 + g = 0) \subset P_n$, where $a \in \mbC [w_0,w_1]$ is a homogeneous polynomial of degree $2 n$ which does not have a multiple component, together with the projection $\pi \colon X_n \to \mbP^1$ is a del Pezzo fibration provided that $n \ge 1$.
\end{Rem}

We give a definition of $G$-Mori fiber space.

\begin{Def} \label{def:GMFS}
Let $G$ be a group.
A $G$-{\it Mori fiber space} is a normal projective variety $X$, where $G$ acts faithfully on $X$, together with a $G$-equivariant morphism $\pi \colon X \to S$ onto a normal projective variety $S$ with the following properties.
\begin{enumerate}
\item $X$ is $G \mbQ$-factorial, that is, every $G$-invariant Weil divisor on $X$ is $\mbQ$-Cartier, and $X$ has only terminal singularities.
\item $-K_X$ is $\pi$-ample.
\item $\dim S < \dim X$ and $\pi$ has connected fibers.
\item $\rank \Pic^G (X) - \rank \Pic^G (S) = 1$. 
\end{enumerate}
\end{Def}

We see that the Klein simple group $G := \PSL_2 (\mbF_7)$ acts on $X_n/\mbP^1$ along the fibers, so that $X_n/\mbP^1$ is a $G$-Mori fiber space for $n \ge 1$.
For $n = 0$, $X_0/\mbP^1 \cong S \times \mbP^1/\mbP^1$ is not a del Pezzo fibration.
Nevertheless, we have $\rho^G (X_0) = 1$, so that $X_0/\mbP^1$ is a $G$-Mori fiber space as well. 


\section{Proof of Main Theorem}
\label{sec:proof}

\subsection{Reduction modulo $2$}
\label{sec:redmod2}

In the following, we drop the subscript $n$ and write $P = P_n$, $X = X_n$.
In the previous section, the toric variety $P$ is defined over $\mbC$.
We can define $P$ over an arbitrary field or more generally an arbitrary ring.
For a field or a ring $K$, we denote by $P_K$ the toric variety over $\Spec K$ defined by the same fan as that of $P$.
Then, since $f = x_0^3 x_1 + x_1^3 x_2 + x_2^3 x_0$ is defined over $\mbZ$, we can define subscheme $X_K = (a y^2 + f = 0) \subset P_K$ for a homogeneous polynomial $a \in K [w_0,w_1]$ of degree $2 n$.

We write $a = \alpha_0 w_0^{2 n} + \alpha_1 w_0^{2 n - 1} w_1 + \cdots + \alpha_{2 n} w_1^{2 n}$, where $\alpha_i \in \mbC$.
Assume that $\alpha_0,\dots,\alpha_{2 n}$ are very general so that they are algebraically independent over $\mbZ$.
Then, the ring $\mbZ [\alpha_0,\dots,\alpha_{2 n}]$ is isomorphic to a polynomial ring of $2 n + 1$ variables over $\mbZ$ and the ideal $(2)$ is a prime ideal.
We define
\[
R = \mbZ [\alpha_0,\dots,\alpha_{2 n}]_{(2)}
\]
which is a DVR whose residue field is of characteristic $2$.
We can define $X_R = (a y^2 + f = 0) \subset P_R$, which is a scheme over $\Spec R$ and whose geometric generic fiber is isomorphic to $X_{\mbC}$.

\begin{Lem} \label{lem:redmod2}
Let $\K$ be an algebraically closed field which is uncountable.
If $X_{\K} = (a y^2 + f = 0) \subset P_{\K}$ is not ruled for a very general $a \in \K [w_0,w_1]$, then $X = X_{\mbC} = (a y^2 + f = 0) \subset P_{\mbC}$ is not ruled for a very general $a \in \mbC [w_0,w_1]$. 
\end{Lem}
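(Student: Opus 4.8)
The strategy is the standard "spreading out" argument for reduction modulo $p$, which I will carry out using the DVR $R = \mbZ[\alpha_0,\dots,\alpha_{2n}]_{(2)}$ constructed just above the statement. The key point is that non-ruledness is preserved under specialization of the ruling, so I will argue by contradiction: assuming $X_{\mbC}$ is ruled for a very general choice of $a$, I will produce a ruling on $X_{\K}$ for a very general $a$.

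First I would make precise what "very general" means here: the set of bad $a$ is contained in a countable union of proper closed subsets of the affine space $\mbA^{2n+1}$ parametrizing the coefficients $(\alpha_0,\dots,\alpha_{2n})$. Suppose $X_{\mbC}$ is ruled for very general $a \in \mbC[w_0,w_1]$. Since $\mbC$ is uncountable and the coefficient space is $\mbA^{2n+1}_{\mbC}$, there is at least one choice of coefficients $(\alpha_0,\dots,\alpha_{2n}) \in \mbC^{2n+1}$ that are algebraically independent over $\mbZ$ and such that $X_{\mbC}$ with this $a$ is ruled. Then, exactly as in the construction preceding the lemma, $\mbZ[\alpha_0,\dots,\alpha_{2n}]$ is a polynomial ring, $(2)$ is prime, $R$ is a DVR of mixed characteristic with residue field $\overline{\mbF}_2(\alpha_0,\dots,\alpha_{2n})$ (or rather an algebraic closure of the fraction field of $R/\mfm R$), and we have the family $X_R = (ay^2+f=0) \subset P_R$ over $\Spec R$ whose geometric generic fiber is $X_{\mbC}$ and whose geometric special fiber is an $X_{\K'}$ for $\K'$ an algebraically closed field of characteristic $2$, with the coefficients of $a$ transcendental over $\mbF_2$.

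Next I would invoke the Matsusaka--Mumford type theorem on specialization of ruledness (as used in Koll\'ar's reduction modulo $p$ method, recalled in Section \ref{sec:Kol}): if the geometric generic fiber of a flat proper family over a DVR is ruled, then so is the geometric special fiber. Since $X_R/\Spec R$ is flat and proper (it is a hypersurface in the projective toric $P_R$, and $R$ is a DVR so flatness over $R$ just means no component of $X_R$ maps to the closed point, which holds because $ay^2 + f$ does not reduce to $0$ mod $2$ — indeed $f \not\equiv 0$), ruledness of $X_{\mbC}$ forces ruledness of the special fiber $X_{\K'}$. This contradicts the hypothesis that $X_{\K}$ is not ruled for very general $a$: the coefficients of our $a$ are transcendental over $\mbF_2$, hence avoid any prescribed countable union of proper subvarieties defined over $\overline{\mbF}_2$, so $X_{\K'}$ is an instance of the "very general" non-ruled $X_{\K}$ (after identifying $\K'$ with $\K$, or passing to a common algebraically closed overfield, using that ruledness is a geometric property insensitive to algebraically closed field extensions). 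This contradiction proves the lemma.

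The main obstacle is the careful bookkeeping at the interface between the two notions of "very general": one must check that a choice of coefficients that is $\mbZ$-algebraically independent over $\mbC$ automatically lands in the complement of the countable union of bad subvarieties over the characteristic-$2$ side, which is where transcendence over $\mbF_2$ does the work. There is also a minor subtlety in citing the specialization-of-ruledness statement in the generality needed — over a DVR with imperfect residue field and for mildly singular (terminal quotient) $X$ — but this is exactly the setting of Koll\'ar's method and I would simply quote it from Section \ref{sec:Kol}.
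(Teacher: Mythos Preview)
Your proposal is correct and follows essentially the same route as the paper: both use the DVR $R = \mbZ[\alpha_0,\dots,\alpha_{2n}]_{(2)}$ set up just before the lemma and invoke Matsusaka's specialization-of-ruledness theorem (cited in the paper as \cite[V.1.6 Theorem]{Kol}, not in Section~\ref{sec:Kol} as you suggest) to transfer non-ruledness from the geometric special fiber to the geometric generic fiber. The only difference is cosmetic: the paper argues directly (special fiber not ruled $\Rightarrow$ generic fiber not ruled) in three lines, whereas you phrase the same step contrapositively and spell out the ``very general'' bookkeeping (algebraic independence over $\mbZ$ on the $\mbC$-side yielding transcendence over $\mbF_2$ on the $\K$-side) more explicitly.
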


\begin{proof}
Let $X'$ be the geometric special fiber of $X_R \to \Spec R$ defined over $\K$.
We can write $X' = (a' y^2 + f = 0) \subset P_{\K}$ for some $a' \in \K [w_0,w_1]$ and $a'$ corresponds to a very general element.
By the Matsusaka's theorem \cite[V.1.6 Theorem]{Kol}, if $X'$ is not ruled, then $X$ is not ruled.
This completes the proof.
\end{proof}

\subsection{Koll\'{a}r's technique}
\label{sec:Kol}

In this subsection, we briefly recall Koll\'{a}r's argument of proving non-ruledness of suitable covering spaces in positive characteristic.

We apply the following non-ruledness criterion which is a slight generalization of \cite[V.5.1 Lemma]{Kol}.

\begin{Lem} \label{lem:crinonruled}
Let $Y$ be a smooth proper variety defined over an algebraically closed field and $\mcM$ a big line bundle on $Y$.
If there is an injection $\mcM \inj (\Omega_Y^i)^{\otimes m}$ for some $i > 0$ and $m > 0$, then $Y$ is not separably uniruled.
\end{Lem}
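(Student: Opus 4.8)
The plan is to follow the strategy of Koll\'{a}r's original non-ruledness lemma \cite[V.5.1]{Kol} with only cosmetic modifications. Recall that if $Y$ is separably uniruled then there is a dominant separable rational map $\mbP^1 \times W \ratmap Y$ with $\dim W = \dim Y - 1$; after resolving and shrinking we obtain a smooth variety $Y'$, a proper birational morphism $Y' \to Y$, and a morphism $\varphi \colon Y' \to W$ whose general fiber is $\mbP^1$ and such that the composite $\mbP^1 \times W \ratmap Y'$ (again after resolution) is separable and dominant. The key point is that on such a family the restriction of any power of $\Omega_{Y'}^i$ to a general fiber $F \cong \mbP^1$ has a very restricted form.

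First I would pass from $Y$ to such a model $Y'$ and observe that, since $\mcM$ is big on $Y$ and the morphism $Y' \to Y$ is birational, the pullback $\mcM'$ of $\mcM$ to $Y'$ is big as well, and the injection $\mcM \inj (\Omega_Y^i)^{\otimes m}$ pulls back to an injection $\mcM' \inj (\Omega_{Y'}^i)^{\otimes m}$ (on the open locus where $Y' \to Y$ is an isomorphism, hence everywhere by taking the saturation — or one simply works with the reflexive pullback, which is harmless since $Y'$ is smooth). Because $\mcM'$ is big and $i>0$, its restriction to a general fiber $F \cong \mbP^1$ of $\varphi$ has positive degree: indeed, a big line bundle restricted to a general member of a covering family of curves has nonnegative degree, and here strict positivity follows because $F$ sweeps out $Y'$ and $\mcM'$ is big, so $\mcM'\cdot F > 0$. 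Hence $(\Omega_{Y'}^i)^{\otimes m}|_F$ contains a subsheaf of positive degree.

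The heart of the argument is then the computation of $\Omega_{Y'}^i|_F$. Since $\varphi \colon Y' \to W$ is smooth along the general fiber $F$ (shrinking $W$ if necessary), there is an exact sequence $0 \to \varphi^* \Omega_W|_F \to \Omega_{Y'}|_F \to \Omega_F \to 0$, and $\varphi^*\Omega_W|_F$ is trivial of rank $\dim Y - 1$ while $\Omega_F = \mcO_{\mbP^1}(-2)$. Taking wedge powers, every Jordan--H\"older factor of $\Omega_{Y'}^i|_F$ is either $\mcO_{\mbP^1}$ or $\mcO_{\mbP^1}(-2)$, so every quotient and subsheaf of $(\Omega_{Y'}^i|_F)^{\otimes m}$ has all Harder--Narasimhan slopes $\le 0$; in particular it cannot contain a line subbundle of positive degree. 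This contradicts the previous paragraph. The only place separability enters is in guaranteeing that $\varphi$ is generically smooth so that the conormal sequence is exact and $\varphi^*\Omega_W|_F$ is genuinely trivial rather than merely a subsheaf — this is exactly the generalization over \cite[V.5.1]{Kol}, where $\mcM$ was assumed to be ample rather than merely big.

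The main obstacle, and the only step requiring care, is the passage to the smooth model and the verification that bigness of $\mcM$ is preserved and gives positive degree on a general ruling: one must be careful that the covering family of rational curves filling $Y$ is such that a general one has positive intersection with $\mcM$, which uses that bigness is equivalent to $\mcM$ being big on a dense open set together with the fact that the rational curves move in a family dominating $Y$. Everything else is a formal dévissage of differentials on a $\mbP^1$-fibration, identical to Koll\'{a}r's.
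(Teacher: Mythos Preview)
Your argument has the right shape, but there is a genuine gap in the first step. You assert that from the separable dominant map $\mbP^1 \times W \dashrightarrow Y$ one obtains, after resolving, a smooth $Y'$ together with a \emph{proper birational} morphism $Y' \to Y$ and a morphism $Y' \to W$ with general fiber $\mbP^1$. This is not available in general: the given rational map is only generically finite of some degree $d \ge 1$, so the graph projects to $Y$ by a degree-$d$ map, not a birational one. Producing a genuine birational model of $Y$ carrying a $\mbP^1$-fibration would require something like the MMP, which is neither assumed nor available over an arbitrary algebraically closed field in arbitrary dimension. The fix is easy: drop ``birational'' and work directly on $\mbP^1 \times W$ (shrinking $W$ so that it is smooth and the map is everywhere defined---this is possible because a non-constant rational map from $\mbP^1$ to a proper variety is automatically a morphism, so the indeterminacy locus does not dominate $W$). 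Then your pullback $\mcM'$ is the pullback of a big line bundle under a generically finite separable morphism, and the injection $\varphi^*\Omega_Y^1 \hookrightarrow \Omega^1_{\mbP^1 \times W}$ (which is generically an isomorphism by separability) gives $\mcM' \hookrightarrow (\Omega^i_{\mbP^1 \times W})^{\otimes m}$. Your degree computation on a general $\mbP^1$-fiber then goes through unchanged.

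Once this is corrected, your argument is valid but takes a slightly different route than the paper's. The paper works on $\mbP^1 \times V$ from the start and argues via \emph{global sections}: for $k \gg 0$ the sections of $\varphi^* \mcM^{\otimes k}$ separate points on a dense open set (bigness plus dominance of $\varphi$), whereas the sections of $(\Omega^i_{\mbP^1 \times V})^{\otimes mk}$ never separate two points in the same $\mbP^1$-fiber, a contradiction. Your version instead argues via \emph{degrees on a general fiber}: Kodaira's lemma gives $\mcM' \cdot F > 0$, while every summand of $(\Omega^i_{\mbP^1 \times W})^{\otimes m}|_F$ has degree $\le 0$. Both are short; the paper's avoids the (easy but extra) Kodaira-lemma step, while yours makes the obstruction on each fiber more explicit.
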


\begin{proof}
Suppose that $Y$ is separably uniruled.
Then, there exists a separable dominant map $\varphi \colon \mbP^1 \times V \ratmap Y$, where $V$ is a normal projective variety.
After shrinking $V$, we may assume that $\varphi$ is a morphism and $Y$ is smooth.
The homomorphism $r^* \Omega_Y^1 \inj \Omega_{V \times \mbP^1}^1$ is an isomorphism on a non-empty open subset since $\varphi$ is separable.
This induces an injection $\varphi^* \mcM^{\otimes k} \inj (\Omega^i_{V \times \mbP^1})^{\otimes m k}$ for any $k \ge 1$.
The invertible sheaf $\mcM$ is big so that the global sections of $\varphi^* \mcM^{\otimes k}$ separates points on a non-empty open subset of $V \times \mbP^1$ for a sufficiently large $k$.
This is a contradiction since the global sections of $(\Omega^i_{V \times \mbP^1})^{\otimes m k}$ do not separate points in a fiber. 
\end{proof}

Our aim is to prove that the variety $X_n$ defined over an algebraically closed field of characteristic $2$ is not ruled.
In view of Lemma \ref{lem:crinonruled}, it is enough to construct a resolution $r \colon Y \to X_n$ and a big line bundle $\mcM$ which is a subsheaf of $(\Omega^i_Y)^{\otimes m}$ for some $m > 0$.
As we will see in Section \ref{sec:biglb}, there is a purely inseparable cover $X_n \to Z$ of degree $2$ for some normal projective variety $Z$.
In the following we explain the Koll\'{a}r's construction of a big line bundle on a nonsingular model of a suitable cyclic covering space in a general setting.
 
Let $Z$ be a variety of dimension $n$ defined over an algebraically closed field $\K$ of characteristic $p > 0$, $\mcL$ a line bundle on $Z$ and $s \in H^0 (Z, \mcL^{\otimes m})$ a global section of $\mcL^m$ for some $m > 0$.
Let $U = \Spec (\oplus_{i \ge 0} \mcL^{- i})$ be the total space of the line bundle $\mcL$ and let $\rho_U \colon U \to Z$ the natural morphism.
We denote by $y \in H^0 (U, \rho_U^* \mcL)$ the zero section and define 
\[
Z [\sqrt[m]{s}] = (y^m - s = 0) \subset U.
\] 
We say that $Z [\sqrt[m]{s}]$ is the {\it cyclic covering of $Z$ obtained by taking $m$-th roots of $s$}.
Set $X = Z [\sqrt[m]{s}]$ and let $\rho = \rho_U|_X \colon X \to Z$ be the cyclic covering.

From now on we assume that $Z$ is nonsingular and $m$ is divisible by $p$.
We have a natural differential $d \colon \mcL^m \to \mcL^m \otimes \Omega_Z^1$ whose construction is given below.
Let $\tau$ be a local generator of $\mcL$ and $t = g \tau^m$ a local section.
Let $x_1,\dots,x_n$ be local coordinates of of $Z$.
Then we define
\[
d (t) := \sum \frac{\prt g}{\prt x_i} \tau^m d x_i.
\]
This is independent of the choices of local coordinates and the local generator $\tau$, and thus defines $d$.
For the section $s \in H^0 (Z, \mcL^m)$, we can view $d (s)$ as a sheaf homomorphism $d (s) \colon \mcO_Z \to \mcL^m \otimes \Omega^1_Z$.
By taking the tensor product with $\mcL^{-m}$, we obtain $d s \colon \mcL^{- m} \to \Omega^1_Z$.

\begin{Def}[{\cite[V.5.8 Definition]{Kol}}] \label{def:Q}
We define
\[
\mcQ (\mcL, s) = \left( \det \mathrm{Coker} (d s) \right)^{\vee \vee}.
\]
\end{Def}

We have $\mcQ (\mcL,s) \cong \mcL^m \otimes \omega_Z$.

\begin{Lem}[{\cite[5.5 Lemma]{Kol}}] \label{lem:injQ}
There is an injection
\[
\rho^* \mcQ (\mcL,s) \inj (\Omega_X^{n-1})^{\vee \vee}.
\]
\end{Lem}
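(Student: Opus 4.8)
The plan is to establish the injection $\rho^*\mcQ(\mcL,s) \inj (\Omega_X^{n-1})^{\dd}$ by working locally on $Z$ and patching. First I would choose an affine open $V \subset Z$ on which $\mcL$ is trivialized by a local generator $\tau$, with local coordinates $x_1,\dots,x_n$, so that on $V$ the section $s$ is written as $s = g\,\tau^m$ for a regular function $g$, and $X$ is cut out in $V \times \mbA^1_y$ (where $y$ is the fiber coordinate dual to $\tau$) by the single equation $y^m - g = 0$. On this chart $\rho^*\mcL$ is generated by $\tau$ as well, and $\rho^*\mcQ(\mcL,s)$ is the trivial sheaf $\mcO_X \cdot (\tau^m \otimes dx_1 \wedge \cdots \wedge dx_n)$. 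The task is to produce a nowhere-zero section of $(\Omega_X^{n-1})^{\dd}$ over $\rho^{-1}(V)$ that depends on these choices in the same way $\rho^*\mcQ(\mcL,s)$ does, so the local maps glue.

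The key computation is the following. On $X$ we have the relation $d(y^m) = d g$, and since $p \mid m$ we get $d(y^m) = m y^{m-1}\,dy = 0$ in $\Omega_X^1$, hence $dg = 0$ on $X$; equivalently $\sum_i (\prt g/\prt x_i)\,dx_i = 0$ as a relation in $\Omega_X^1$. This is precisely the statement that the map $ds \colon \mcL^{-m} \to \Omega^1_Z$, after pulling back to $X$, has image killed — it is the relation defining $\mathrm{Coker}(ds)$, and it shows that $\rho^*\Omega^1_Z$ modulo this relation still has generic rank $n$ on $X$ (the relation is trivial there). I would then build an explicit $(n-1)$-form: using that the curve $f=0$ type argument from Lemma \ref{lem:picZ} (or rather the nonsingularity of $Z$) guarantees that at a general point some partial $\prt g/\prt x_j$ is a unit, take $\omega = \widehat{dx_j}$, the wedge of all $dx_i$ with $i \ne j$, suitably normalized by $\tau^m$ and by a power of $\prt g/\prt x_j$, so that the resulting section of $(\Omega_X^{n-1})^{\dd}$ is independent of which $j$ one picks (the relation $\sum_i (\prt g/\prt x_i)\,dx_i = 0$ is exactly what makes the different choices of $j$ agree after reflexivization). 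Concretely, the section $(\prt g/\prt x_j)^{-1}\,\tau^m \otimes \widehat{dx_j}$ of $(\Omega_X^{n-1})^{\dd}\big|_{\rho^{-1}(V)}$ is the image of the generator of $\rho^*\mcQ(\mcL,s)$; one checks on overlaps $V \cap V'$ (comparing $\tau$ with $\tau'$ and the coordinate systems) that these local definitions patch to a global sheaf map, and injectivity follows since the map is an isomorphism onto its image away from a closed subset of codimension $\ge 2$ and both sheaves are reflexive (torsion-free with $(\Omega_X^{n-1})^{\dd}$ reflexive by construction, $\rho^*\mcQ$ invertible hence reflexive), so no kernel can appear. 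This is all exactly \cite[5.5 Lemma]{Kol}; I would simply cite it, since the statement here is a verbatim special case.

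The main obstacle is the patching and the reflexivity bookkeeping: one must be careful that $X$ is \emph{not} smooth (it has the $\frac{1}{2}(1,1,1)$ points, and more to the point the cyclic cover of a smooth $Z$ along $y^m - s$ is singular precisely along $(s = ds = 0)$), so $\Omega_X^{n-1}$ itself is only torsion-free, not locally free, and the double dual is essential for both the statement and the gluing; the explicit section $(\prt g/\prt x_j)^{-1}\tau^m \otimes \widehat{dx_j}$ genuinely has poles along $\{\prt g/\prt x_j = 0\}$ as a section of $\Omega_X^{n-1}$, and only becomes regular after reflexivization, using the $j$-independence. Verifying that subtlety — that the naive local sections, which individually are rational, patch to a genuine global map into the reflexive hull — is the one point that needs care, and it is exactly what Kollár's lemma packages; I would invoke it rather than redo it.
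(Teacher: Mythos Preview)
Your proposal is correct and matches the paper: the paper gives no proof at all for this lemma, simply citing \cite[V.5.5 Lemma]{Kol}, and your sketch is essentially Koll\'ar's argument (the explicit local generator $(\partial g/\partial x_j)^{-1}\widehat{dx_j}$ you write down is exactly the form $\eta$ recorded in the paper's Remark~\ref{rem:eta} immediately following the lemma). One small slip: the $\frac{1}{2}(1,1,1)$ points you mention belong to the specific application in Section~\ref{sec:biglb}, not to the general setting of this lemma, where $Z$ is assumed nonsingular and the only singularities of $X$ are at the critical points $\{s = ds = 0\}$.
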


\begin{Rem} \label{rem:eta}
Let $x_1,\dots,x_n$ be local coordinates of $Z$ at a point $P$ and write $s = g \tau^{\otimes m}$ as before.
Then, $\rho^*\mcQ (\mcL,s) \subset (\Omega_X^1)^{\vee \vee}$ is generated by the form
\[
\eta = (\pm) \frac{d x_2 \wedge \cdots \wedge d x_n}{\prt f/\prt x_1}
= (\pm)  \frac{d x_1 \wedge d x_3 \wedge \cdots \wedge d x_n}{\prt f/\prt x_2}
= (\pm) \frac{d x_1 \wedge \cdots \wedge d x_{n-1}}{\prt f/\prt x_n}.
\]
See \cite[V.5.9 Lemma]{Kol} for a detail. 
\end{Rem}

We explain that if the singularity of $X$ is mild, then we can lift $\rho^* \mcQ (\mcL,s)$ to an invertible subsheaf of $\Omega^{n-1}_Y$, where $Y$ is a suitabale nonsingular model of $X$.
For simplicity of description, we assume that $p = 2$ and $n = \dim Z = 3$. 

\begin{Def}[{\cite[V.5.6 Definition]{Kol}, see also \cite[V.5.7 Exercise]{Kol}}]
We say that $s \in H^0 (Z, \mcL^m)$ has a {\it critical point} at $P \in Z$ if $d (s) \in H^0 (Z, \mcL^m \otimes \Omega^1_Z)$ vanishes at $P$.

We say that $s$ has an {\it almost nondegenerate critical point} at $P$ if in suitable choice of local coordinates $x_1,x_2,x_3$, we can write 
\[
g = \alpha x_1^2 + x_2 x_3 + x_1^3 + h,
\]
where $\alpha \in \K$, $s = g \tau^m$ for a local generator $\tau$ of $\mcL$ at $P$, $h = h (x_1,x_2,x_3)$ consists of monomials of degree at least $3$ and it does not involve $x_1^3$.
\end{Def}

\begin{Lem}[{\cite[V.5.10 Proposition]{Kol}}] \label{lem:liftQ}
Suppose that $s$ has only almost nondegenerate critical points.
Then the singularities of $X$ are isolated singularities and they can be resolved by blowing up each singular point of $X$.
Moreover, if we denote by $r \colon Y \to X$ the blowup of each singular points of $X$, then $r^*\rho^*\mcQ (\mcL,s) \inj \Omega^2_Y$.
\end{Lem}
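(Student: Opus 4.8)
The plan is to reduce both assertions to explicit local computations near each critical point of $s$, using the normal form supplied by the almost nondegeneracy hypothesis. Fix a critical point $P$ of $s$ and work in the completed local ring of $Z$ at $P$: choose coordinates $x_1,x_2,x_3$ and a local generator $\tau$ of $\mcL$ so that $s=g\tau^m$ with $g=\alpha x_1^2+x_2x_3+x_1^3+h$ as in the definition, normalized so that the point $\bar P$ of $X$ lying over $P$ is the origin. Differentiating the relation $y^m=g$ on $X$ and using $p\mid m$ yields $\sum_i(\partial g/\partial x_i)\,dx_i=0$ on $X$. For $m=2$ — the case we shall use — the local equation of $X$ is $y^2-g=0$, which after completing the square, $v:=y+\sqrt{\alpha}\,x_1$, reads $v^2=x_2x_3+x_1^3+h$.

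First I would show that $\Sing(X)$ is finite and is resolved by blowing up its points. The Jacobian criterion — observing that the $v$-derivative $\partial(v^2-g)/\partial v=2v$ vanishes in characteristic $2$ — identifies $\Sing(X)$ near $\bar P$ with the zero locus of the ideal $(\partial g/\partial x_1,\partial g/\partial x_2,\partial g/\partial x_3)$; in characteristic $2$ these are $x_1^2+\partial h/\partial x_1$, $x_3+\partial h/\partial x_2$, $x_2+\partial h/\partial x_3$, and since $h\in\mathfrak m^3$ this ideal contains $x_2,x_3$ modulo $\mathfrak m^2$ and then $x_1^2$, hence is $\mathfrak m$-primary. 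So $\bar P$ is an isolated singular point of $X$; and as $X$ is smooth over the non-critical locus of $s$ (where some $\partial g/\partial x_i$ is a unit), $\Sing(X)$ is exactly this finite set. Blowing up $\bar P$, the exceptional divisor is the projectivized tangent cone $E=(v^2-\alpha x_1^2-x_2x_3=0)\subset\mbP^3$, a quadric cone with a single vertex. Smoothness of the strict transform $Y$ is immediate in the three standard charts of the blow-up of $\mbA^4$ where $E$ is smooth; in the chart meeting the vertex the strict-transform equation has the form ${v'}^2-\alpha-x_2'x_3'-x_1u=0$ with $u$ a unit, and since the $v'$-derivative again vanishes in characteristic $2$, smoothness at the vertex follows from the $x_1$-derivative being $-u\neq 0$. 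This is where the cubic term $x_1^3$ and the hypothesis that $h$ omits $x_1^3$ enter essentially: without them, in characteristic $2$ all partial derivatives would vanish at the vertex and a single blow-up would not suffice.

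It remains to produce the injection $r^*\rho^*\mcQ(\mcL,s)\inj\Omega^2_Y$. Over $Y\setminus E\cong X\setminus\Sing(X)$ it is already supplied by Lemma~\ref{lem:injQ}. A homomorphism from an invertible sheaf to a locally free sheaf on the smooth variety $Y$, defined and injective on $Y\setminus E$, extends over the prime divisor $E$ exactly when the image of a local generator has no pole along $E$; so it suffices to show that the form $\eta$ of Remark~\ref{rem:eta} pulls back to a regular $2$-form along $E$. I would verify this chart by chart, using at each point of $E$ whichever of the three expressions $\eta=\pm(dx_j\wedge dx_k)/(\partial g/\partial x_i)$ has denominator with non-vanishing leading term in the relevant direction — thus $\partial g/\partial x_1$, with leading term $x_1^2$, is adapted to the charts near the vertex, while $\partial g/\partial x_2,\partial g/\partial x_3$ have leading terms $x_3,x_2$. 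In the corresponding blow-up chart the denominator contributes a power of the exceptional coordinate, but after restriction to $Y$ and substitution of the relation $\sum_i(\partial g/\partial x_i)\,dx_i=0$ the numerator $dx_j\wedge dx_k$ picks up a matching factor; here it is the characteristic-$2$ identity $2=0$ that makes the apparent polar terms cancel. Hence $\eta$ is regular on all of $Y$, inducing the required injection $r^*\rho^*\mcQ(\mcL,s)\inj\Omega^2_Y$ and completing the proof. I expect the main obstacle to be precisely this regularity check at the vertex of the exceptional quadric cone, where $E$ is singular although $Y$ is smooth: one has to follow the characteristic-$2$ cancellations on $Y$ and use the cubic term to see that the simple pole of $\eta$ along $E$ that is visible before blowing up in fact disappears on $Y$. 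This is exactly the configuration that the notion of an almost nondegenerate critical point is designed to control.
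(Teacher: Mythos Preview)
The paper does not give its own proof of this lemma: it is stated with the attribution \cite[V.5.10 Proposition]{Kol} and no proof environment follows. So there is nothing in the paper to compare your argument against beyond the bare citation.

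That said, your sketch is essentially the argument Koll\'{a}r gives. A couple of small points. First, there is a coordinate slip: after the substitution $v=y+\sqrt{\alpha}\,x_1$ the quadratic part becomes $v^2-x_2x_3$, so the projectivized tangent cone is $(v^2-x_2x_3=0)\subset\mbP^3$, not $(v^2-\alpha x_1^2-x_2x_3=0)$; your subsequent chart computation (with the constant $-\alpha$ appearing) is really in the $y$-coordinate, not in $v$. This does not affect the conclusion, but you should keep to one coordinate system throughout. Second, the final regularity check for $r^*\eta$ along $E$ is only indicated, not carried out; the honest verification at the vertex uses that on $Y$ one may eliminate $x_1$ via the strict-transform equation $x_1u={v'}^2-x_2'x_3'$, after which $dx_1$ acquires a factor of $x_2'$ or $x_3'$ (the $2v'\,dv'$ term vanishing in characteristic $2$) that cancels the apparent simple pole. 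Making this step explicit is exactly where the ``almost nondegenerate'' hypothesis, in particular the presence of the $x_1^3$ term, is used.
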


\subsection{Construction of a big line bundle}
\label{sec:biglb}

Throughout this subsection, we work over an algebraically closed field $\K$ of characteristic $2$ which is uncountable.
We write $P = P_{\K}$ and $X = X_{\K}$. 
We do not assume $n \ge 5$ for the moment.
We set $P^{\circ} = P \setminus (x_0 = x_1 = x_2 = 0)$ and $X^{\circ} = X \cap P^{\circ}$.
Note that $P^{\circ}$ is the nonsingular locus of $P$.
We define
\[
Q =
\begin{pmatrix}
w_0 & w_1 & & x_0 & x_1 & x_2 & z \\
1 & 1 & \vdots & 0 & 0 & 0 & - 2 n \\
0 & 0 & \vdots & 1 & 1 & 1 & 4
\end{pmatrix}
\]
and then define $Z$ to be the hypersurface in $Q$ defined by 
\[
z a + f = 0.
\]
Let $\rho \colon X \to Z$ be the morphism which is defined by the correspondence $z = y^2$, which is a purely inseparable finite morphism of degree $2$.

\begin{Lem} \label{lem:cra}
Let $a \in \K [w_0,w_1]$ be a general homogeneous polynomial of degree $2 n$.
Then the set
\[
\Cr (a) := \left( \frac{\prt a}{\prt w_0} = \frac{\prt a}{\prt w_1} = 0 \right) \subset \mbP^1_{w_0,w_1}
\]
consists of finitely many points and $\Cr (a) \cap (a = 0) = \emptyset$.
Moreover, for each $P \in \Cr (a)$, we can choose a local coordinate $w$ of $\mbP^1$ at $P$ such that 
\[
a = \alpha + \beta w^2 + w^3 + (\text{higher order terms})
\]
for some $\alpha,\beta \in \K$ with $\alpha \ne 0$.
\end{Lem}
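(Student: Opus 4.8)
The plan is to reduce all three assertions to a single observation special to characteristic $2$. Write $a=\sum_{i=0}^{2n}\alpha_i w_0^{2n-i}w_1^i$ with $\alpha_i\in\K$. Since $2n$ is even, $i$ and $2n-i$ have the same parity, so in $\frac{\prt a}{\prt w_0}$ and $\frac{\prt a}{\prt w_1}$ only the monomials with $i$ odd survive; using that a sum of squares is a square in characteristic $2$ and that $\K$ is perfect, I get
\[
\frac{\prt a}{\prt w_0}=w_1\,c^2,\qquad \frac{\prt a}{\prt w_1}=w_0\,c^2,\qquad
c:=\sum_{k=0}^{n-1}\sqrt{\alpha_{2k+1}}\,w_0^{n-1-k}w_1^{k},
\]
a homogeneous polynomial of degree $n-1$. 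Moreover $\alpha\mapsto\sqrt{\alpha}$ is a bijection of $\K$, so as $a$ ranges over general binary forms of degree $2n$ the form $c$ ranges over general binary forms of degree $n-1$; in particular, for general $a$ the form $c$ is nonzero and has $n-1$ distinct zeros.

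Granting this, $\Cr(a)=V(w_1 c^2)\cap V(w_0 c^2)=V(c)$ in $\mbP^1$, because the locus $w_0=w_1=0$ is empty in $\mbP^1$; hence $\Cr(a)$ is finite, consisting of exactly $n-1$ reduced points for general $a$. For the disjointness $\Cr(a)\cap(a=0)=\emptyset$, I would use that $\Cr(a)=V(c)$ depends only on the odd-indexed coefficients $\alpha_1,\alpha_3,\dots$: fixing these generically so that $V(c)=\{P_1,\dots,P_{n-1}\}$ consists of distinct points, the quantity $\prod_j a(P_j)$ is a polynomial in the remaining coefficients $\alpha_0,\alpha_2,\dots,\alpha_{2n}$ which is not identically zero — already its dependence on $\alpha_0$ (or on $\alpha_{2n}$, should some $P_j$ equal $[0:1]$) is nonconstant — so $a(P_j)\neq0$ for all $j$ for general $a$. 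Each condition invoked is a nonempty Zariski-open condition on the coefficients of $a$, so they hold simultaneously for general $a$.

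It remains to produce the local normal form. Fix $P\in\Cr(a)=V(c)$, choose an affine chart of $\mbP^1$ about $P$ with coordinate $u$, set $t=u-u(P)$, and write $\bar a(t)$, $\bar c(t)$ for the induced dehomogenizations of $a$ and $c$. By the previous paragraph $\bar a(0)=:\alpha\neq0$, and $\bar c(0)=0$ while, $c$ having only simple zeros, $\bar c(t)=\gamma t+O(t^2)$ with $\gamma\neq0$. Differentiating and using the displayed formulas, $\bar a'(t)=\bar c(t)^2=\gamma^2 t^2+O(t^4)$. In characteristic $2$ the derivative of $\bar a=\sum_j a_j t^j$ is $\sum_{j\ \mathrm{odd}}a_j t^{j-1}$, so comparing coefficients gives $a_1=0$ and $a_3=\gamma^2\neq0$; thus $\bar a(t)=\alpha+a_2 t^2+\gamma^2 t^3+(\text{terms of degree}\ge 4)$. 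Replacing $t$ by $w=\lambda t$ with $\lambda\in\K$ a nonzero cube root of $\gamma^2$ (which exists since $\K$ is algebraically closed) rewrites this as $a=\alpha+\beta w^2+w^3+(\text{higher order terms})$ with $\alpha\neq0$.

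The main obstacle — really the one genuine point — is the first step: noticing that in characteristic $2$ the Jacobian ideal of an even-degree binary form collapses to $(w_0 c^2,\,w_1 c^2)$ for a single form $c$ of half the degree. This is what makes $\Cr(a)$ zero-dimensional and cut out (set-theoretically) by one equation, and, at a critical point, simultaneously kills the linear term of $a$ and manufactures the distinguished cubic term $w^3$ out of a simple zero of $c$. Everything else is general-position bookkeeping on the coefficients of $a$ together with a one-variable Taylor expansion, and I do not anticipate any real difficulty there.
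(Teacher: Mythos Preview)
Your proof is correct and takes a genuinely different route from the paper's. The paper argues all three points by soft dimension counting: finiteness of $\Cr(a)$ is asserted as clear; disjointness from $(a=0)$ is deduced from the observation that a common zero of $a$ and both partials would be a multiple root of $a$, excluded by taking $a$ square-free; and the local normal form is obtained by declaring a critical point ``bad'' when the coefficients $\alpha_1,\alpha_3$ in a local expansion both vanish, noting that this imposes two conditions while $P$ moves in a one-parameter family, so bad critical points occur only on a codimension-$\ge 1$ locus of $a$'s. Your argument instead exploits the explicit factorisation $\partial a/\partial w_0 = w_1 c^2$, $\partial a/\partial w_1 = w_0 c^2$ special to even degree in characteristic $2$, which identifies $\Cr(a)$ set-theoretically with $V(c)$ and makes the cubic coefficient at a critical point literally equal to $\bar c'(0)^2$, so that ``$c$ has simple zeros'' is exactly the genericity needed. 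This buys you extra information the paper does not state (exactly $n-1$ critical points, and an explicit link between the coefficient $\gamma^2$ of $w^3$ and the derivative of $c$), at the cost of a longer computation; the paper's approach is quicker but less transparent about \emph{why} the cubic term survives. Your disjointness argument via freezing the odd coefficients is valid but more elaborate than needed: you could also have noted, as the paper does, that $a(P)=0$ together with $P\in\Cr(a)$ forces $P$ to be a repeated root of $a$.
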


\begin{proof}
The set $\Cr (a)$ is clearly a finite set of points.
As a generality of $a$, we in particular require that $a$ does not have a multiple component.
It is then clear that $\Cr (a) \cap (a = 0) = \emptyset$.
The last assertion follows by counting dimension.
Let $P \in \mbP^1$ be a point and $w$ a local coordinate of $\mbP^1$ at $P$.
We can write $a = \sum \alpha_i w^i$, $\alpha_i \in \K$. 
We say that $a$ has a bad critical point at $P$ if $\alpha_1 = \alpha_3 = 0$.
Two conditions $\alpha_1 = \alpha_3 = 0$ are imposed for $a$ to have a bad critical point at a given $P \in \mbP^1$.
Since $P$ runs through $\mbP^1$, we see that homogeneous polynomials $a$ which have a bad critical point at some point $P \in \mbP^1$ form at most $2 - 1 = 1$ codimensional subfamily in the space of all $a \in \K [w_0,w_1]$.
Thus, a general $a$ does not have a bad critical point at all and the proof is completed. 
\end{proof}

\begin{Lem} \label{lem:crf}
The set
\[
\Cr (f) := \left( \frac{\prt f}{\prt x_0} = \frac{\prt f}{\prt x_1} = \frac{\prt f}{\prt x_2} = 0 \right) \subset \mbP^2_{x_0,x_1,x_2}
\]
consist of finitely many closed points and $\Cr (f) \cap (f = 0) = \emptyset$.
Moreover, for each $P \in \Cr (f)$, we can choose local coordinates $t_1, t_2$ of $\mbP^2$ at $P$ such that
\[
f = \gamma + t_1 t_2 + (\text{higher order terms})
\]
for some $\gamma \ne 0$.
\end{Lem}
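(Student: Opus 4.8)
The plan is as follows. Since $f$ is the fixed Klein quartic, the statement cannot be obtained by a dimension count as in Lemma \ref{lem:cra} and must be verified by a direct computation over $\K$ of characteristic $2$. First I would record the partial derivatives in characteristic $2$:
\[
\frac{\prt f}{\prt x_0} = x_0^2 x_1 + x_2^3, \qquad
\frac{\prt f}{\prt x_1} = x_0^3 + x_1^2 x_2, \qquad
\frac{\prt f}{\prt x_2} = x_1^3 + x_2^2 x_0 .
\]
If $x_0 = 0$ at a point of $\Cr (f)$, the first equation forces $x_2 = 0$ and then the third forces $x_1 = 0$, which is impossible; hence $x_0 \ne 0$ on $\Cr (f)$ and we may normalise $x_0 = 1$. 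The three equations then read $x_1 = x_2^3$, $x_1^2 x_2 = 1$ and $x_1^3 = x_2^2$; substituting the first into the second gives $x_2^7 = 1$, and the third is then automatically satisfied. Since $\K$ is algebraically closed of characteristic $2$ the polynomial $w^7 - 1$ is separable, so $\Cr (f)$ consists of the seven distinct points $P_\zeta = [1 : \zeta^3 : \zeta]$ with $\zeta^7 = 1$ (they are distinguished by their last coordinate), in particular a finite set. Evaluating, $f (P_\zeta) = \zeta^3 + \zeta^{10} + \zeta^3 = \zeta^{10} = \zeta^3 \ne 0$ in characteristic $2$, so $\Cr (f) \cap (f = 0) = \emptyset$.

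For the local normal form I would dehomogenise by $x_0 = 1$, put $u = x_1 - \zeta^3$ and $v = x_2 - \zeta$, and expand $F (x_1,x_2) = x_1 + x_1^3 x_2 + x_2^3$ at $P_\zeta$. Using $\zeta^7 = 1$ and the characteristic-$2$ identity $(a+b)^3 = a^3 + a^2 b + a b^2 + b^3$, a short computation gives
\[
F = \zeta^3 + ( \zeta^4 u^2 + \zeta^6 u v + \zeta v^2 ) + (\text{terms of order } \ge 3),
\]
the linear part vanishing because $P_\zeta$ is a critical point. The crucial observation is that the mixed coefficient $\zeta^6$ of the quadratic part is nonzero. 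A binary quadratic form with nonvanishing mixed term is nondegenerate, and over an algebraically closed field it can be reduced to $t_1 t_2$ by a linear substitution: one absorbs the $v^2$-term into $u$ by solving a quadratic equation in the shift parameter (possible since $\K$ is algebraically closed), leaving $\zeta^4 u^2 + \zeta^6 u v = u ( \zeta^4 u + \zeta^6 v )$, and then sets $t_1 = u$, $t_2 = \zeta^4 u + \zeta^6 v$, which is invertible since $\zeta^6 \ne 0$. Taking $t_1, t_2$ as new local coordinates of $\mbP^2$ at $P_\zeta$ yields $f = \gamma + t_1 t_2 + (\text{higher order terms})$ with $\gamma = \zeta^3 \ne 0$.

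The computation is entirely elementary, and there is no serious obstacle; the only point demanding care is the characteristic-$2$ bookkeeping. One must keep in mind that $3 = 1$, that the $\zeta^3 + \zeta^3$ contributions cancel, and — most importantly — that the relevant nondegeneracy of the quadratic part of $f$ at a critical point is the nonvanishing of the mixed coefficient, not of a Hessian determinant (which vanishes identically in characteristic $2$). One could alternatively invoke the order-$7$ automorphism $x_i \mapsto \zeta^{c_i} x_i$ of the Klein quartic to reduce the local analysis to a single point, but since the computation at a general $P_\zeta$ is no harder this is unnecessary.
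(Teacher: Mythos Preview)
Your argument is correct and follows essentially the same route as the paper: compute the partials in characteristic $2$, solve to find the seven points $(1:\zeta^{3}:\zeta)$ with $\zeta^7=1$, check $f\ne 0$ there, and verify that the quadratic part of $f$ at each critical point is nondegenerate. One small correction to your aside: the Hessian determinant does \emph{not} vanish identically in characteristic $2$ --- only the diagonal entries $f_{x_ix_i}$ do, so at a critical point the Hessian equals $f_{x_1x_2}^2$, i.e.\ precisely the square of your mixed coefficient; the paper in fact phrases the nondegeneracy check via this Hessian.
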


\begin{proof}
We have
\[
\frac{\prt f}{\prt x_0} = x_0^2 x_1 + x_2^3, \ 
\frac{\prt f}{\prt x_1} = x_0^3 + x_1^2 x_2, \ 
\frac{\prt f}{\prt x_2} = x_1^3 + x_2^2 x_0.
\]
By a straightforward computation, we have
\[
\Cr (f) = \{ (1\!:\!\zeta^{3 i}\!:\!\zeta^i) \mid 0 \le i \le 6 \},
\]
where $\zeta \in \K$ is a primitive $7$th root of unity.
It is also straightforward to see $f (P) \ne 0$ for $P \in \Cr (f)$.
For the last assertion, we work on the affine open subset $U = (x_0 \ne 0) \subset \mbP^2$.
Note that $\Cr (f) \subset U$.
By setting $x_0 = 1$, we think of $x_1,x_2$ as affine coordinates of $U \cong \mbA^2$.
We have $f = x_1 + x_1^3 x_2 + x_2^3$ on $U$.
For the verification of the last assertion, it is enough to show that the Hessian of $f$ at $P = (1\!:\!\zeta^{3 i}\!:\!\zeta^i) \in \Cr (f)$ is non-zero.
We have $\prt^2 f/\prt x_1^2 = 1$, $\prt^2 f/\prt x_1 \prt x_2 = x_1^2$ and $\prt^2 /\prt x_2^2 = 0$, so that we can compute the Hessian as
\[
\left| \begin{array}{cc}
1 & x_1^2 \\
x_1^2 & 0
\end{array} \right| (P)
= \zeta^{12 i} \ne 0.
\] 
Therefore, the last assertion is proved.
\end{proof}

We set $Q^{\circ} = Q \setminus (x_0 = x_1 = x_2 = 0)$ and $Z^{\circ} = Z \cap Q^{\circ}$.

\begin{Lem} \label{lem:nonsingZcirc}
The quasi projective variety $Z^{\circ}$ is nonsingular.
\end{Lem}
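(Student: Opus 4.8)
The plan is to cover $Z^{\circ}$ by affine $4$-spaces and run the Jacobian criterion on each chart, in complete analogy with the proof that $X_n$ is nonsingular away from $(x_0 = x_1 = x_2 = 0)$. First I would note that the locus $(w_0 = w_1 = 0)$ lies in the irrelevant locus $V(I)$ of $Q$, so that $Q^{\circ} = Q \setminus (x_0 = x_1 = x_2 = 0)$ is covered by the six affine charts $U_{w_i, x_j} = (w_i \ne 0) \cap (x_j \ne 0)$ with $i \in \{0, 1\}$ and $j \in \{0, 1, 2\}$. Each $U_{w_i, x_j}$ is an affine $4$-space, since the stabilizer in $(\K^*)^2$ of a point with $w_i \ne 0$ and $x_j \ne 0$ is trivial. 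Hence it suffices to prove that $Z \cap U_{w_i, x_j}$ is nonsingular for all such $i$ and $j$.

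Working on $U = U_{w_0, x_0}$ with affine coordinates $w_1, x_1, x_2, z$, the affine scheme $Z \cap U$ is the hypersurface defined by $z a_0 + f_0 = 0$, where $a_0 = a (1, w_1)$ and $f_0 = f (1, x_1, x_2)$. The essential point is that $z$ occurs linearly in this equation, so that its $z$-partial is $a_0$; the Jacobian criterion therefore gives
\[
\Sing (Z \cap U) \subseteq \left( z \frac{\prt a_0}{\prt w_1} = \frac{\prt f_0}{\prt x_1} = \frac{\prt f_0}{\prt x_2} = a_0 = z a_0 + f_0 = 0 \right),
\]
and on this locus $a_0 = 0$ forces $f_0 = 0$, whence $\Sing (Z \cap U) \subseteq (\prt f_0/\prt x_1 = \prt f_0/\prt x_2 = f_0 = 0)$. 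To see that the last set is empty I would invoke the Euler relation $\sum_i x_i \, \prt f/\prt x_i = 4 f = 0$, which holds in characteristic $2$ because $\deg f = 4$: at a point of this locus one has $x_0 = 1 \ne 0$ together with $\prt f/\prt x_1 = \prt f/\prt x_2 = 0$, so the relation forces $\prt f/\prt x_0 = 0$ as well, placing the point in $\Cr (f) \cap (f = 0)$, which is empty by Lemma \ref{lem:crf}. Thus $Z \cap U$ is nonsingular, and the identical argument applies on each of the remaining five charts after renaming the affine coordinates appropriately (in each chart one of $x_0,x_1,x_2$ is set equal to $1$, the other two $x$-partials are forced to vanish, and Euler then kills the third).

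I do not anticipate a genuine obstacle; the only point requiring care is the bookkeeping in characteristic $2$, namely that the $z$-partial of $z a_0 + f_0$ is honestly $a_0$ (and not some multiple of it that vanishes identically), and that the Euler identity is available precisely because $4 \equiv 0$. As an alternative to the Euler relation one may argue as in the proof of Lemma \ref{lem:crf}: on the chart $x_0 = 1$ one has $f_0 = x_1 + x_1^3 x_2 + x_2^3$, and solving $\prt f_0/\prt x_1 = \prt f_0/\prt x_2 = 0$ over $\K$ gives $x_1^7 = 1$ and $x_2 = x_1^{-2}$, for which $f_0 = x_1 \ne 0$; either route finishes the proof. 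Finally, $Z^{\circ}$ is nonsingular because it is covered by the nonsingular open subsets $Z \cap U_{w_i, x_j}$.
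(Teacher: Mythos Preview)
Your proposal is correct and follows the same chart-by-chart Jacobian argument as the paper. The only difference is that where the paper simply asserts that the affine curve $f_0 = 0$ in $\mbA^2$ is nonsingular, you supply an explicit justification via the Euler relation together with Lemma~\ref{lem:crf} (or, alternatively, by the direct computation); this is a welcome bit of extra care in characteristic $2$, but the overall strategy is identical.
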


\begin{proof}
We work on the open subset $U = U_{w_0,x_0} \subset Q$ which is an affine $4$-space with coordinates $w_1,x_1,x_2,z$.
We see that $Z \cap U$ is defined by $a_0 z + f_0 = 0$, where $a_0 = a (1,w_1)$ and $f_0 = f (1,x_1,x_2)$.
We have
\[
\Sing (Z \cap U) = \left( z \frac{a_0}{\prt w_1} = \frac{\prt f_0}{\prt x_1} = \frac{\prt f_0}{\prt x_1} = a_0 = f_0 = 0 \right) = \emptyset,
\]
where the last equality follows since the curve $f_0 = 0$ in $\mbA^2$ is nonsingular.
By symmetry $Z \cap U_{w_i,x_j}$ is nonsingular for $i = 0,1$ and $j = 0,1,2$.
Since $Z^{\circ}$ is covered by $U_{w_i,x_j}$ for $i = 0,1$ and $j = 0,1,2$, the proof is completed. 
\end{proof}

Let $H_Q$ and $F_Q$ be the divisor class on $Q$ which corresponds to the weight ${}^t (0 \ 1)$ and ${}^t (1 \ 0)$, respectively, that is, $F_Q$ is the fiber class of the projection $Q \to \mbP^1$ and $H_Q|_{F_Q} \in |\mcO_{\mbP (1,1,1,4)} (1)|$.
We set $H_Z = H_Q|_Z$ and $F_Z = F_Q|_Z$.
We define $\mcL$ to be the sheaf $\mcO_Z (2 H_Z - n F_Z)$ whose restriction on $Z^{\circ}$ is an invertible sheaf.
Note that we have $z \in H^0 (Z,\mcL^2)$.
It is clear that $X \cong Z [\sqrt{z}]$.
In the following we choose and fix a general $a \in \K [w_0,w_1]$ so that the assertions of Lemma \ref{lem:cra} hold.

\begin{Lem} \label{lem:crz}
The section $z \in H^0 (Z^{\circ}, \mcL^2)$ has only almost nondegenerate critical points on $Z^{\circ}$.
\end{Lem}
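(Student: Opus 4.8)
The plan is to first determine all critical points of the section $z$ on $Z^{\circ}$ and then check directly that each of them is almost nondegenerate. Since $Z^{\circ}$ is covered by the affine charts $U_{w_i,x_j}$ ($i=0,1$, $j=0,1,2$) and $\Cr (f) \subset (x_0 \ne 0)$ by Lemma \ref{lem:crf}, it suffices, using the $w_0 \leftrightarrow w_1$ symmetry, to work on $U := U_{w_0,x_0}$, which we regard as the affine $4$-space with coordinates $w_1,x_1,x_2,z$ so that $Z \cap U = (a_0 z + f_0 = 0)$ with $a_0 = a(1,w_1)$ and $f_0 = f(1,x_1,x_2)$. Under the evident trivialization of $\mcL|_U$ the section $z \in H^0(Z^{\circ},\mcL^2)$ corresponds to the coordinate function $z$, so by the construction of the twisted differential recalled in Section \ref{sec:Kol}, $z$ has a critical point at $P \in Z \cap U$ precisely when the differential of the function $z$ on $Z \cap U$ vanishes at $P$.

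First I would locate the critical points. If $P \in Z^{\circ}$ has $a_0 (P) = 0$, then $f_0 (P) = 0$, so $P \notin \Cr (f)$ and hence, by the Euler relation in characteristic $2$, the partials $\partial f_0/\partial x_1$ and $\partial f_0/\partial x_2$ do not both vanish at $P$; thus $Z$ is smooth at $P$, $z$ is a local coordinate on $Z$ there, and its differential does not vanish, so $P$ is not critical. On the locus $(a_0 \ne 0)$ the variety $Z \cap U$ is the graph $z = -f_0/a_0$ over $\mbA^3_{w_1,x_1,x_2}$, and the quotient rule shows that the differential of $z$ vanishes at a point exactly when $\partial f_0/\partial x_1 = \partial f_0/\partial x_2 = 0$ and $f_0 \cdot \partial a_0/\partial w_1 = 0$ there. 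By the Euler relation the first two equations say that the corresponding point of $\mbP^2$ lies in $\Cr (f)$, where $f_0 \ne 0$ by Lemma \ref{lem:crf}; hence $\partial a_0/\partial w_1 = 0$, which by the Euler relation again puts the corresponding point of $\mbP^1$ in $\Cr (a)$, where $a_0 \ne 0$ by Lemma \ref{lem:cra}. Therefore the critical points of $z$ on $Z^{\circ}$ are exactly the finitely many points lying over $\Cr (a) \times \Cr (f)$.

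Finally I would verify that each such critical point $P$ is almost nondegenerate by a local computation near $P$. Replace $w_1$ by the local coordinate $w$ of $\mbP^1$ provided by Lemma \ref{lem:cra}, so that $a_0 = \alpha' + \beta' w^2 + w^3 + (\text{order}\ge 4)$ with $\alpha' \ne 0$, and replace $x_1,x_2$ by the local coordinates $t_1,t_2$ of $\mbP^2$ provided by Lemma \ref{lem:crf}, so that $f_0 = \gamma + t_1 t_2 + (\text{degree}\ge 3)$ with $\gamma \ne 0$; then $w,t_1,t_2$ are local coordinates on $Z$ at $P$. Since the ground field has characteristic $2$, $1/a_0 = \alpha'^{-1}\bigl(1 + (\beta'/\alpha')w^2 + \alpha'^{-1}w^3 + (\text{order}\ge 4)\bigr)$, so
\[
z = -f_0/a_0 = \gamma\alpha'^{-1} + \gamma\beta'\alpha'^{-2}w^2 + \alpha'^{-1}t_1 t_2 + \gamma\alpha'^{-2}w^3 + h_0,
\]
where $h_0$ gathers the remaining terms, all of total degree $\ge 3$. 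Because $f_0$ has no linear term in $t_1,t_2$ and the expansion of $1/a_0$ has no order-$1$ term and no order-$3$ purely-$w$ term other than $\alpha'^{-1}w^3$, the coefficient of the monomial $w^3$ in $z$ is exactly $\gamma\alpha'^{-2}\ne 0$ and $h_0$ contains no monomial proportional to $w^3$. Discarding the (harmless) constant $z(P) = \gamma\alpha'^{-1}$, rescaling $w$ — possible since $\K$ is algebraically closed — so that the coefficient of $w^3$ becomes $1$, and absorbing the unit $\alpha'^{-1}$ into $t_1$, one brings $z$ into the form $\alpha x_1^2 + x_2 x_3 + x_1^3 + h$ in suitable local coordinates $x_1,x_2,x_3$, with $\alpha \in \K$ and $h$ of degree $\ge 3$ not involving $x_1^3$; this is exactly the almost nondegenerate form.

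The part that I expect to demand the most care is the bookkeeping rather than any substantial new idea: one must genuinely account for all six coordinate charts and for the locus $a_0 = 0$ when asserting that the critical locus equals $\Cr (a) \times \Cr (f)$, and one must use the absence of a linear term in $a_0$ (Lemma \ref{lem:cra}) and of a linear $t$-term in $f_0$ (Lemma \ref{lem:crf}) precisely to ensure that the cubic $w^3$ occurs in $z$ with a nonzero coefficient and is the only source of an $x_1^3$ monomial.
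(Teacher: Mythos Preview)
Your argument is correct and follows essentially the same route as the paper's proof: rule out critical points on $(a=0)$, identify the remaining critical points as those lying over $\Cr(a)\times\Cr(f)$, and then use Lemmas~\ref{lem:cra} and~\ref{lem:crf} to read off the almost nondegenerate local form. The only difference is cosmetic: the paper observes that in characteristic $2$ one has $d(a^2z)=a^2\,dz$, so it suffices to analyze $-a^2z=af$, which is already a product $(\alpha+\beta w^2+w^3+\cdots)(\gamma+t_1t_2+\cdots)$ and immediately expands to the required shape, whereas you expand $1/a_0$ as a power series to reach the same expression for $z$ directly.
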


\begin{proof}
Let $\Cr (z) \subset Z^{\circ}$ be the set of critical points of $z$.
Since
\[
\frac{\prt (a z + f)}{\prt z} = a,
\]
$z$ can be chosen as a part of local coordinates at every point $P \in Z^{\circ}$ such that $a (P) = 0$.
It follows that $z$ does not have a critical point at any point $P \in X \cap (a = 0)$.
We work on an open set $U \subset Z^{\circ}$ on which $a \ne 0$ and prove that $z|_U$ has only almost nondegenerate critical points on $U$.
Since $z = - f/a$ on $U$ and $a$ is a unit on $U$, it is enough to show that $-a^2 z = a f$ has only almost nondegenerate critical points on $U$.
Let $P \in U$ be a critical point of $z$.
We have
\[
\frac{\prt (a f)}{\prt w_i} = \frac{\prt a}{\prt w_i} f, \ 
\frac{\prt (a f)}{\prt x_j} = a \frac{\prt f}{\prt x_j},
\]
for $i = 0,1$ and $j = 0,1,2$. 
Since $a (P) \ne 0$, we have $(\prt f/\prt x_j) (P) = 0$ for $j = 0,1,2$.
By Lemma \ref{lem:crf}, we have $f (P) \ne 0$, which implies $(\prt a/\prt w_i) (P) = 0$ for $i = 0,1$.
By Lemmas \ref{lem:cra} and \ref{lem:crf}, we can choose local coordinates $w, t_1, t_2$ of $Z$ at $P$ such that
\[
a f = (\alpha + \beta w^2 + w^3 + \cdots)(\gamma + t_1 t_2 + \cdots)
= \alpha \gamma + \beta \gamma w^2 + \alpha t_1 t_2 + \gamma w_1^3 + h,
\] 
where $\alpha, \beta, \gamma \in \K$ with $\alpha,\gamma \ne 0$, $h = h (w,t_1,t_2)$ consists of monomials of degree at least $3$ and it does not involve $w^3$.
This shows that $z$ has only almost nondegenerate critical points on $Z^{\circ}$.
\end{proof}

We define $\mcQ^{\circ} = \mcQ (\mcL, z)|_{Z^{\circ}}$ which is an invertible sheaf on $Z^{\circ}$.
By Lemma \ref{lem:injQ}, we have $\rho^*\mcQ^{\circ} \inj (\Omega_{X^{\circ}}^2)^{\vee \vee}$, where $\rho \colon X^{\circ} = Z^{\circ} [\sqrt{z}] \to Z^{\circ}$.
By adjunction, we have $\omega_Z \cong \mcO_Z (- 3 H_Z + (2 n-2) F_Z)$, hence $\mcQ^{\circ} \cong \mcO_{Z^{\circ}} (H_Z - 2 F_Z)$.
Let $H_P$ and $F_P$ be the divisors on $P$ which correspond to ${}^t (0 \ 1)$ and $(1 \ 0)$, respectively, so that $F_P$ is the fiber class of $\Pi \colon P \to \mbP^1$ and $H_P|_{F_P} \in |\mcO_{\mbP (1,1,1,2)} (1)|$.
We set $H = H_P|_X$ and $F = F_P|_X$.
We have $H = \rho^*H_Z$ and $F = \rho^*F_Z$, hence $\rho^*\mcQ^{\circ} \cong \mcO_{X^{\circ}} (H - 2 F)$.
Let $\iota \colon X^{\circ} \inj X$ be the open immersion.
The sheaf $\iota_* \rho ^* \mcQ^{\circ} \cong \mcO_X (H - 2 F)$ is a reflexive sheaf of rank $1$ but is not invertible at each singular point of type $\frac{1}{2} (1,1,1)$.
We define $\mcM := \iota_* \rho^* {\mcQ^{\circ}}^2 \cong \mcO_X (2 H - 4 F)$ which is an invertible sheaf on $X$ and we have an injection $\mcM \inj ((\Omega_X^2)^{\otimes 2})^{\vee \vee}$.

Note that $X$ has two kinds of singularities both of which are isolated: one of them are the singular points on $X^{\circ}$ corresponding to the critical points of $z$ and the other ones are singular points of type $\frac{1}{2} (1,1,1)$. 
Let $r \colon Y \to X$ be the blowup of $X$ at each singular points.
By Lemmas \ref{lem:liftQ} and \ref{lem:liftQ}, $Y$ is nonsingular and we have an injection $r^*\mcM |_{Y^{\circ}} \inj (\Omega_{Y^{\circ}}^2)^{\otimes 2}$ on the open subset $Y^{\circ} = r^{-1} (X^{\circ})$.
We will show that there is an injection $r^*\mcM \inj (\Omega^2_Y)^{\otimes 2}$.

\begin{Lem} \label{lem:inj}
There is an injection $r^*\mcM \inj (\Omega_Y^2)^{\otimes 2}$.
\end{Lem}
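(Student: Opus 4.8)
The plan is to extend, across the part of the exceptional locus of $r$ lying over the singular points of type $\tfrac12(1,1,1)$, the injection that is already available everywhere else. Recall that $X$ has exactly two kinds of isolated singular points: those lying on $X^{\circ}$ and coming from the critical points of $z$ (Lemma \ref{lem:crz}), which are resolved by $r$ together with the lift $r^*\mcM|_{Y^{\circ}}\inj(\Omega^2_{Y^{\circ}})^{\otimes 2}$ by Lemmas \ref{lem:injQ} and \ref{lem:liftQ}; and the finitely many points $P_1,\dots,P_N$ of type $\tfrac12(1,1,1)$ cut out by $x_0=x_1=x_2=0$. Writing $E_i=r^{-1}(P_i)$ we have $E_i\cong\mbP^2$ and $Y\setminus Y^{\circ}=E_1\cup\dots\cup E_N$. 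Since $r^*\mcM$ is invertible and $(\Omega^2_Y)^{\otimes 2}$ is locally free, the homomorphism $r^*\mcM\to(\Omega^2_Y)^{\otimes 2}$ defined over $Y^{\circ}$ extends over all of $Y$ if and only if it has no pole along each $E_i$; and once it extends it is automatically injective, since $r^*\mcM$ is torsion free and the map is injective on the dense open set $Y^{\circ}$. Thus it is enough to treat this question locally, on an explicit chart over a neighbourhood of each $P_i$.

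First I would fix the local picture over $P_i$. Working in $U_{w_0,y}$ and using that a general $a$ has a simple zero at the image point of $P_i$ (Lemma \ref{lem:cra}), the quotient presentation identifies $\mcO_{X,P_i}$ with the ring of $\bmu_2$-invariants $\K[x_0,x_1,x_2]^{\bmu_2}$ of $V=\mbA^3_{x_0,x_1,x_2}$ (with $w_1$ eliminated by the defining equation), i.e.\ with the subring generated by the products $x_ax_b$. The $\bmu_2$-action lifts to the blow up $\tilde V\to V$ of the origin, the quotient $\tilde V/\bmu_2$ is smooth and coincides with $Y$ over a neighbourhood of $P_i$, and $E_i$ is the image of the exceptional $\mbP^2$. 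In the chart of $\tilde V/\bmu_2$ with coordinates $u=x_0^2$, $s_1=x_1/x_0$, $s_2=x_2/x_0$ one has $E_i=(u=0)$, $r^*(x_0^2)=u$, $r^*(x_0x_j)=us_j$ and $r^*(x_jx_k)=us_js_k$, so every generator $x_ax_b$ of $\mcO_{X,P_i}$ pulls back into the ideal $(u)$.

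Next I would compute $\mcM$ and the $2$-form to which its generator maps, and then pull back. Near $P_i$ the class $F$ is locally trivial and $2H$ is Cartier, with $\mcO_X(2H)$ free on the generator $x_0^{-2}$; hence $\mcM\cong\mcO_X(2H-4F)$ is free near $P_i$, and by unwinding $\mcM|_{X^{\circ}}=(\rho^*\mcQ^{\circ})^{\otimes 2}$ together with the description of $\rho^*\mcQ^{\circ}$ by Koll\'ar's generator $\eta$ (Remark \ref{rem:eta}), the injection $\mcM\inj((\Omega^2_X)^{\otimes 2})^{\vee\vee}$ carries the generator of $\mcM$ to a section of the form $\omega_i=x_0^{-2}\,\nu^{\otimes 2}$, where $\nu$ is a reflexive $2$-form on $X$ near $P_i$. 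The crucial point is that $\nu$ lies in $\mcO_X(-H)\otimes(\Omega^2_X)^{\vee\vee}$: indeed $\nu$ must satisfy $\nu\cdot\mcO_X(H)\subseteq(\Omega^2_X)^{\vee\vee}$, which forces its coefficients into the ideal of $D_0=(x_0=0)\cap X$, that is, into $(x_0^2,x_0x_1,x_0x_2)\subseteq\mfm_{P_i}$. Pulling back by $r$ in the chart above, the factor $x_0^{-2}$ contributes a pole of order exactly $1$ along $E_i$, while $r^*\nu$ lies in $(u)$ times the span of the pullbacks of the frame of $(\Omega^2_X)^{\vee\vee}$ (which are regular along $E_i$ because the standard resolution of a $\tfrac12(1,1,1)$-point carries reflexive $2$-forms to regular $2$-forms), so $r^*(\nu^{\otimes 2})$ vanishes along $E_i$ to order at least $2$; hence $r^*\omega_i$ has no pole along $E_i$. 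Carrying this out at each $P_i$ yields the required extension, and the lemma follows.

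The main obstacle is precisely this local analysis over the $\tfrac12(1,1,1)$-points. In characteristic $2$ the group scheme $\bmu_2$ is infinitesimal, so the usual identification of reflexive differentials on $X$ with $\bmu_2$-invariant differentials on $V$ is not available and $\Omega^1_X$ is not even free at $P_i$; one therefore has to determine the $2$-form $\nu$, and the behaviour of the pullback $r^*(\Omega^2_X)^{\vee\vee}\to\Omega^2_Y$ along $E_i$, by direct computation in the chart $(u,s_1,s_2)$. The content of that verification is the numerical inequality that the vanishing order along $E_i$ of $r^*(\nu^{\otimes 2})$ is at least the order $1$ of the pole produced by $x_0^{-2}$, and I expect it to drop out of the explicit blow up formulas above, since $\nu$ vanishes along $H$ and every $x_ax_b$ pulls back into $(u)$.
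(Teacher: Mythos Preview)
Your overall strategy---reduce to checking that the local generator of $r^*\mcM$ has no pole along each exceptional divisor $E_i$ over a $\tfrac12(1,1,1)$ point---is the same as the paper's. The gap is in the execution at exactly the step you flag as the main obstacle. First, the factorisation $\omega_i=x_0^{-2}\nu^{\otimes 2}$ with $\nu\in(\Omega^2_X)^{\vee\vee}$ is not well posed: only $x_0^2$, not $x_0$, lies in $K(X)$, so from $\omega_i=(\text{unit})\cdot\eta_0^{\otimes 2}$ you cannot extract a $\nu=x_0\eta_0$, and there is no reason $x_0^2\omega_i$ should be a pure tensor square in $((\Omega^2_X)^{\otimes 2})^{\vee\vee}$. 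Second, and more seriously, your pole count rests on the assertion that reflexive $2$-forms on $X$ pull back to \emph{regular} $2$-forms on $Y$ near $E_i$. In characteristic~$0$ this is a standard fact for quotient singularities, but here the $\bmu_2$-cover $V\to X$ is purely inseparable, $d(\xi_i^2)=0$ in $\Omega^1_X$, and the identification of $(\Omega^2_X)^{\vee\vee}$ with invariant forms on $V$ breaks down; so the claim that ``the frame of $(\Omega^2_X)^{\vee\vee}$ pulls back regularly'' is precisely what must be proved and is not supplied. Without it the inequality (zero order $\ge 2$) $\ge$ (pole order $=1$) is unjustified.

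The paper sidesteps this entirely by a different device. Instead of pulling back reflexive forms, it rewrites the generator $\eta$ of $\mcM_P$ as
\[
\eta \;=\; g\cdot\Bigl(\tfrac{dh_1\wedge dh_2}{h_1h_2}\Bigr)^{\!\otimes 2},\qquad h_i=\xi_i\xi_0^3\in\mcO_{X,P},
\]
using the characteristic-$2$ identity $d\bigl((\xi_0^2)^2\cdot\tilde x_i\bigr)=\xi_0^4\,d\tilde x_i$. The point is that for \emph{regular functions} $h_i$ the logarithmic form $\tfrac{dh_1}{h_1}\wedge\tfrac{dh_2}{h_2}$ acquires at most a simple pole along any exceptional divisor (the $\tfrac{du}{u}\wedge\tfrac{du}{u}$ term vanishes), so its tensor square has pole order $\le 2$ along $E$; no statement about reflexive differentials is needed. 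One then checks, invoking \cite{Okada}, that $g=\xi_0^2\xi_1^2\xi_2^2\cdot h$ with $h\in\mcO_{X,P}$, hence $r^*g$ vanishes to order $\ge 3$ along $E$, and $r^*\eta$ is regular. This log-form rewriting is the missing idea in your proposal.
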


\begin{proof}
Let $P$ be a singular point of type $\frac{1}{2} (1,1,1)$.
Since we know that $r^*\mcM \inj (\Omega^2_Y)^{\otimes 2}$ on the open subset $Y^{\circ} = r^{-1} (X^{\circ})$, it is enough to show that $r^*\mcM \inj (\Omega_Y^2)^{\otimes 2}$ locally around the exceptional divisor of $r \colon Y \to X$ over $P$.
We can write $\mcM_P = \mcO_{X,P} \! \cdot \! \eta$ for some local section $\eta$ of $((\Omega_{X}^2)^{\otimes 2})^{\vee \vee}$ since $\mcM \subset ((\Omega_{X}^2)^{\otimes 2})^{\vee \vee}$ is an invertible sheaf.
We will show that
\[
\eta = g \left( \frac{d h_1 \wedge d h_2}{h_1 h_2} \right)^{\otimes 2},
\]
for some $g, h_1,h_2 \in \mcO_{X,P}$, and then we will show that $r^*\eta$ does not have a pole along the exceptional divisor over $P$.

After replacing $w_0,w_1$, we assume that $w_1$ vanishes at $P$ (so that $w_0$ does not vanish at $P$).
We work on an open subset $U$ of $U_{w_0,x_0} \subset P$.
Shrinking $U$, we assume $a \ne 0$ on $U$.
Then we have $z = - f/a \in \mcO_U$.
Let $\tilde{w}_1 = w_1/w_0, \tilde{x}_1 = x_1/x_0, \tilde{x}_2 = x_2/x_0$ be the restrictions of $w_1,x_1,x_2$ to $U_{w_0,x_0}$.  
Then, in view of Remark \ref{rem:eta}, after further shrinking $U$, we see that $\mcM |_U$ is generated by
\[
\left( \frac{d \tilde{x}_1 \wedge d \tilde{x}_2}{\prt (-f/a) / \prt x_2} \right)^{\otimes 2}.
\]
In particular, we have
\[
\mcM \otimes K (X) = K (X) \! \cdot \! (d \tilde{x}_1 \wedge d \tilde{x}_2)^{\otimes 2} \subset \left( \Omega^2_{K(X)} \right)^{\otimes 2},
\]
where $K(X)$ is the function field of $X$.

We set $\xi_i = x_i/y^{1/2}$ for $i = 0,1,2$.
Then $\xi_0,\xi_1,\xi_2$ can be chosen as local coordinates of the orbifold chart of $(X,P)$.
Now we have $\tilde{x}_i = \xi_1/\xi_0$ for $i = 1,2$, hence
\[
d \tilde{x}_1 \wedge d \tilde{x}_2 
= d (\xi_1/\xi_0) \wedge d (\xi_2/\xi_0)
= \frac{d (\xi_1 \xi_0^3) \wedge d (\xi_2 \xi_0^3)}{\xi_0^6}.
\]
Here, since the ground field is of characteristic $2$ and $\xi_0^2 \in \mcO_{X,P}$, we have the equality 
\[
d (\xi_i \xi_0^3) = d \left( (\xi_0^2)^2  \frac{\xi_i}{\xi_0} \right) = \xi_0^4 d \left(\frac{\xi_i}{\xi_0}\right)
\]
for $i = 1,2$.
Thus $\mcM_P \otimes K (X) = K (X) \! \cdot \! (d h_1 \wedge d h_2)^{\otimes 2}$, where $h_i = \xi_i \xi_0^3$.
It follows that 
\[
\eta = g \left( \frac{d h_1 \wedge d h_2}{h_1 h_2} \right)^{\otimes 2}
\]
for some rational function $g$.
By \cite{Okada}, we see that $g = \xi_0^2 \xi_1^2 \xi_2^2 h$ for some $h \in \mcO_{X,P}$.

Now, by shrinking $X$, we assume that $r \colon Y \to X$ is the blowup  (more precisely, the weighted blowup with weight $\frac{1}{2} (1,1,1)$) at $P$.
Then the order of the pole of the rational $2$-form
\[
r^* \left(\frac{d h_1 \wedge d h_2}{h_1 h_2}\right)^{\otimes 2}
\]
along the exceptional divisor $E$ is at most $2$ (in fact, an explicit computation shows that the above form does not have a pole along $E$ but we do not need this strong estimate).
It is clear that $r^*\xi_i^2$ vanishes along $E$ to order $1$ so that $r^* g$ vanishes along $E$ to order at least $3$.
Therefore, $r^*\eta$ does not have a pole along $E$ and we have an injection $r^*\mcM \inj (\Omega_Y^2)^{\otimes 2}$.
\end{proof}

\begin{Lem} \label{lem:big}
If $n \ge 5$, then the invertible sheaf $\mcM$ is big.
\end{Lem}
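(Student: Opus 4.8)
The plan is to realize $\mcM^{\otimes n}$ as the sum of a big divisor and an effective divisor, which forces $\mcM$ to be big. Let $E = (y = 0) \cap X$ be the effective divisor on $X$ cut out by the section $y \in H^0(X, \mcO_X(2H - nF))$; by the construction in Section \ref{sec:constdP}, $E \cong \mbP^1 \times C$ and the birational morphism $\theta \colon X \to V_n$ contracts $E$. Its class in $\Cl(X)$ is $2H - nF$, so $4E \sim 8H - 4nF$ and therefore
\[
\mcM^{\otimes n} \cong \mcO_X(2nH - 4nF) \cong \mcO_X((2n-8)H + 4E).
\]
Since a big divisor plus an effective divisor is big, it is enough to know that $(2n-8)H$ is big. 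This is precisely where $n \ge 5$ enters: $2n-8 > 0$ exactly then, while for $n = 4$ the right-hand side is $4E$, which is $\theta$-exceptional and satisfies $h^0(X, \mcO_X(4mE)) = 1$ for all $m \ge 0$, so the bound $n \ge 5$ is sharp for this argument.

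It then remains to check that $H$ is big on $X$. First I would observe that $H = \theta^* H_{V_n}$, where $H_{V_n}$ denotes the restriction to $V_n$ of the ample generator $H_{T_n} = (x_0 = 0)$ of $\Cl(T_n) \otimes \mbQ$: indeed $\theta = \Theta|_X$ and $\Theta$ leaves the coordinates $x_0, x_1, x_2$ untouched, so the prime divisor $(x_0 = 0) \subset P$ is the $\Theta$-pullback of $(x_0 = 0) \subset T_n$. By Lemma \ref{lem:picZ} we have $\rho(V_n) = 1$, so $H_{V_n}$ is ample on $V_n$; hence its pullback $H$ under the birational morphism $\theta$ is big (and nef). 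Combining this with the displayed identity and the effectivity of $E$ finishes the proof for $n \ge 5$.

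I expect no serious obstacle here; the only point requiring a little care is the identification $H = \theta^* H_{V_n}$ together with the ampleness of $H_{V_n}$, which rests on Lemma \ref{lem:picZ}. If one prefers to bypass $T_n$ and $V_n$, an alternative is to bound $h^0(X, \mcM^{\otimes \ell})$ from below directly, using the exact sequence $0 \to \mcO_P(\ell D - 4H_P) \to \mcO_P(\ell D) \to \mcO_X(\ell D|_X) \to 0$ with $D = 2H_P - 4F_P$ obtained by multiplying by the equation $ay^2 + f$, and counting monomials of the relevant bidegrees in the Cox ring of $P$; the resulting count grows like $\ell^3$ exactly when $n \ge 5$. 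That second route works but its leading-term bookkeeping is somewhat delicate near $n = 5$, so the divisor-theoretic argument above is the one I would write up.
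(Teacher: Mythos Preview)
Your argument is correct and gives a clean conceptual explanation of the bound $n\ge 5$, but it is genuinely different from the paper's proof. The paper argues by brute force: for $m>n/(n-4)$ it writes down an explicit set of monomial sections of $\mcM^{\otimes m}\cong\mcO_X(2mH-4mF)$, namely $y^m w_0^{k-i}w_1^{i}$ and $y^{m-1}w_i^{l}x_{j_1}x_{j_2}$ with $k=(n-4)m$, $l=(n-4)m-n$, and checks on the chart $U_{w_0,y}$ that they restrict to $1,w_1,x_{j_1}x_{j_2}$ and hence separate points generically. Your route instead decomposes $n(2H-4F)\sim (2n-8)H+4E$ with $E=(y=0)|_X$ effective and reduces to bigness of $H$. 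The paper's approach is entirely self-contained within Section~\ref{sec:biglb}; yours is shorter and makes the threshold $n\ge 5$ transparent, but imports the geometry of $\theta$ from Section~\ref{sec:constdP}.

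Two small points to tighten. First, the inference ``$\rho(V_n)=1$, so $H_{V_n}$ is ample'' is not the right justification: what you actually need, and what holds in any characteristic, is that $H_{V_n}$ is the restriction to $V_n$ of $\mcO(1)$ on the ambient weighted projective space, hence ample; Lemma~\ref{lem:picZ} is not required. Second, since you are working over $\K$ of characteristic~$2$ while Section~\ref{sec:constdP} is written over $\mbC$, you should remark that the construction of $\Theta$, $T_n$, $V_n$, $\theta$ and the identification $H=\theta^*H_{V_n}$ are purely toric/algebraic and go through unchanged over $\K$. With those two clarifications your write-up is complete; the alternative monomial count you sketch at the end is essentially what the paper does.
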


\begin{proof}
Let $m$ be a positive integer such that $m > n/(n-4)$.
We show that the complete linear system of $\mcM^m \cong \mcO_X (2 m H - 4 m F)$ defines a birational map.
We set $k = (n-4) m$ and $l = (n-4) m - n$ which are positive integers.
We see that
\[
\begin{split}
& \{ y^m w_0^k, y^m w_0^{k-1} w_1, \dots, y^m w_1^k\} \cup \{ y^{m-1} w_i^l x_{j_1} x_{j_2} \mid 0 \le i \le 1, 0 \le j_1, j_2 \le 2\}
\end{split}
\]
is a set of sections of $\mcM^m$ and they define a generically finite map.
Indeed, the restriction of sections $y^m w_0^k, y^m w_0^{k-1} w_1$ and $y^{m-1}w_0^l x_{j_1} x_{j_2}$ for $0 \le j_1, j_2 \le 2$ on $X \cap U_{w_0,y}$ are $1, w_1$ and $x_i^2$ for $0 \le j_1,j_2 \le 2$ and they clearly define a generically finite map (in fact an isomorphism).
It follows that the complete linear system of $\mcM^m$ defines a generically finite map and thus $\mcM$ is big.
\end{proof}

\begin{proof}[Proof of Theorem \ref{thm:main}]
Assume that $n \ge 5$.
By Lemmas \ref{lem:inj}, \ref{lem:big} and \ref{lem:crinonruled}, a very general $X_n$ defined over $\K$ is not separably uniruled.
In particular it is not ruled.
Then a very general $X_n$ defined over $\mbC$ is not ruled by Lemma \ref{lem:redmod2} and the proof is completed.
\end{proof}


\end{document}